\newcolumntype{L}{D{.}{.}{2,5}}
\theoremstyle{plain}
\newtheorem{thm}{Theorem}
\newtheorem{proposition}{Proposition}
\newtheorem{lemma}{Lemma}
\newtheorem{mydef}{Definition}
\DeclareMathOperator{\cl}{cl}
\DeclareMathOperator{\dom}{dom}
\DeclareMathOperator{\CAT}{CAT}
\newtheorem{remark}{Remark}
\theoremstyle{definition}
\begin{document}

\title[On a theorem about Mosco convergence]{\textbf On a theorem about \\Mosco convergence in Hadamard spaces}
\author{\textbf{Arian B\"erd\"ellima}}
\thanks{This work was supported by Deutscher Akademischer Austauschdients (DAAD) and it is original results from author's Thesis.
Electronic address: berdellima@gmail.com\\
MSC: 47H09, 46N10, 30L05}
\address{Institute for Numerical and Applied Mathematics}
\address{University of G\"ottingen}\address{37083 G\"ottingen, Germany}
\maketitle

\begin{abstract}
Let  $(f^n),f$ be a sequence of proper closed convex functions defined on a Hadamard space. We show that the convergence of proximal mappings $J^n_{\lambda}x$ to $J_{\lambda}x$, under certain additional conditions, imply Mosco convergence of $f^n$ to $f$. This result is a converse to a theorem of Ba\v cak about Mosco convergence in Hadamard spaces.
  
\end{abstract} 



\section{Mosco Convergence in Hadamard Spaces}
\subsection{Hadamard spaces} A metric space $(X,d)$ is a $\CAT(0)$ space if it is geodesically connected,
and if every geodesic triangle $\Delta$ with vertices $p,q,r\in X$ and $x\in[p,r],y\in[p,q]$ we have $d(x,y)\leqslant\|\overline{x}-\overline{y}\|$, where $\overline{x}$ and $\overline{y}$ are the comparison points of $x$ and $y$ respectively in the comparison triangle $\overline{\Delta}$. 
Intuitively this means that $\Delta$ is at least as {\em thin} as its comparison triangle $\overline{\Delta}$ in the Euclidean plane (see Figure \ref{triangle}). A complete $\CAT(0)$ space is known as a Hadamard space. The importance of $\CAT(0)$ spaces
was recognized by Alexandrov \cite{Alexandrov} in the 1950s and that is why $\CAT(0)$ spaces are sometimes referred to as spaces of nonpositive curvature in the sense of Alexandrov (see Ballman \cite{Ballman}). The acronym $\CAT(0)$ was originally conceived by Gromov \cite{Gromov} where $C$ stands for Cartan, $A$ for Alexandrov and $T$ for Toponogov, and where $0$ is the upper
curvature bound.  For an extensive treatment of these spaces and the important role they play in mathematics one could refer to Bridson and Haefliger \cite{Brid} or D. Burago et al. \cite{Burago}.
\begin{figure}[h]
\label{triangle}
\caption{Geodesic triangle (left) and its comparison triangle (right)}
\includegraphics[width=10cm]{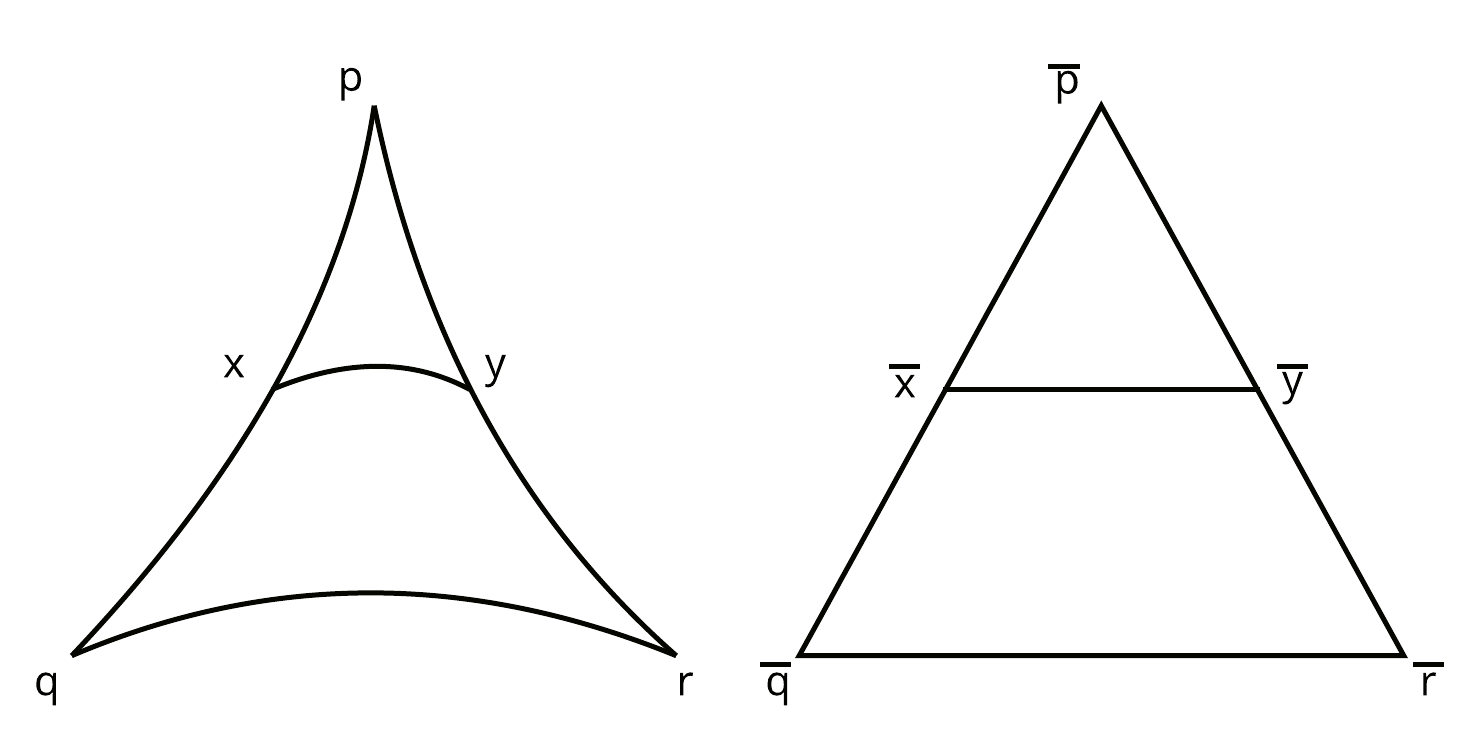}
\centering
\end{figure}

\subsection{Mosco convergence}
Let $(H,d)$ be a Hadamard space. A sequence of functions $f^n:H\to(-\infty,+\infty]$ is said to be Mosco convergent to $f: H\to(-\infty,+\infty]$ and we write $M-\lim_nf^n=f$ if for each $x\in H$:
\begin{enumerate}[(i)]
\item $f(x)\leqslant \liminf_nf^n(x_n)$ whenever $x_n\overset{w}\to x$\label{i}
\item there exists some sequence $(y_n)\subset H$ such that $y_n\to x$ and $f(x)\geqslant \limsup_nf^n(y_n)$.\label{ii}
\end{enumerate}
Note that $x_n\overset{w}\to x$ means $x_n$ converges {\em weakly} to $x$ and by definition $x_n\overset{w}\to x$ if and only if $\lim_nd(x,P_{\gamma}x_n)=0$ for every geodesic segment emanating from $x$. Here $P_{\gamma}x_n$ is the metric projection of $x_n$ onto $\gamma$. Weak limits are unique and every bounded sequence has a weakly convergent subsequence (see \cite[Chapter 2]{Berdellima}).
If \eqref{i} is substituted with strong convergence then one gets what is known as $\Gamma$-convergence. Therefore Mosco convergence is a stronger type of convergence and subsequently Mosco convergence implies  $\Gamma$-convergence. The original motivation for introducing Mosco convergence in analysis was to define a special convergence for closed convex sets of a normed space $X$, in which both the strong and the weak topologies of $X$ are involved (see \cite[Definition 1]{Mosco}). 
Another way to introduce Mosco convergence has been to make the so called \textit{Fenchel conjugate} $f^*$ of a \textit{closed convex proper} function $f$ bicontinous (see \cite[pg. 294]{Attouch}). Note that we say a function $f$ is a closed function whenever it is lower semicontinuous. Mosco convergence can be extended also for sets.
Let $\iota_S$ denote the indicator function of a set $S\subseteq H$ i.e. $\iota_S(x)=0$ if $x\in S$ and $\iota_S(x)=+\infty$ otherwise. A sequence of sets $(S_n)_{n\in\mathbb{N}}$  is said to converge in the sense of Mosco to a set $S$ whenever $M-\lim_n\iota_{S_n}=\iota_S$.

\begin{proposition}\label{prop}\cite[Corollary 5.2.8]{Bacak}
 Let $(H,d)$ be a Hadamard space and $(C_n)_{n\in\mathbb{N}}$ a sequence of closed convex sets. If $M-\lim_nC_n=C$ for some set $C\subseteq H$ then $C$ is closed and convex.
\end{proposition}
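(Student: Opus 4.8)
The plan is to verify directly that $C=\{x\in H:\iota_C(x)=0\}$ is both closed and convex, using the two Mosco conditions (i) and (ii) for the indicator functions $\iota_{C_n},\iota_C$ together with the fact that an indicator takes only the values $0$ and $+\infty$. The starting observation is that this dichotomy rigidifies the $\limsup$/$\liminf$ in the definition: if a sequence $(z_n)\subset H$ satisfies $\limsup_n\iota_{C_n}(z_n)\leqslant 0$ --- which is exactly what condition (ii) yields at a point of $C$, where $\iota_C=0$ --- then $\iota_{C_n}(z_n)=0$, i.e.\ $z_n\in C_n$, for all large $n$; and conversely, if $w_n\in C_n$ for infinitely many $n$ and $w_n\overset{w}\to x$, then $\liminf_n\iota_{C_n}(w_n)=0$, so condition (i) forces $\iota_C(x)\leqslant 0$, i.e.\ $x\in C$.

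For closedness I would take $x\in\overline C$, pick $c_k\in C$ with $c_k\to x$, and apply condition (ii) at each $c_k$ to obtain sequences $(y^k_n)_n$ with $y^k_n\to c_k$ and, by the observation above, $y^k_n\in C_n$ once $n\geqslant N_k$. A diagonal selection of indices $n_1<n_2<\cdots$ with $n_k\geqslant N_k$ and $d(y^k_{n_k},c_k)\to 0$ then yields points $z_k:=y^k_{n_k}\in C_{n_k}$ with $z_k\to x$; padding these into a full sequence (say $w_n:=z_k$ for $n_k\leqslant n<n_{k+1}$) produces $w_n\to x$, hence $w_n\overset{w}\to x$, with $w_n\in C_n$ along the subsequence $(n_k)$. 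The observation then gives $x\in C$, so $C=\overline C$.

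For convexity I would fix $x,y\in C$ and $t\in[0,1]$, let $m$ be the point dividing the geodesic $[x,y]$ in ratio $t:(1-t)$, and use condition (ii) to get $x_n\to x$, $y_n\to y$ with $x_n,y_n\in C_n$ for $n\geqslant N$. Convexity of each $C_n$ places the corresponding division point $m_n\in[x_n,y_n]$ in $C_n$, and the convexity of the metric in a Hadamard space (see \cite{Brid}) gives $d(m_n,m)\leqslant(1-t)d(x_n,x)+t\,d(y_n,y)\to 0$, so $m_n\to m$ and hence $m_n\overset{w}\to m$. Filling in the finitely many missing indices arbitrarily, the observation and condition (i) give $m\in C$; letting $t$ vary shows $[x,y]\subset C$, i.e.\ $C$ is convex.

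The hard part is bookkeeping rather than conceptual: condition (i) is quantified over \emph{full} weakly convergent sequences, whereas condition (ii) and the convexity of the $C_n$ only furnish membership in $C_n$ \emph{eventually} or along a diagonal subsequence, so one must carefully pad and diagonalize to manufacture a genuine full sequence $w_n\overset{w}\to x$ with $\liminf_n\iota_{C_n}(w_n)=0$. Everything else reduces to unwinding the definitions, using only that strong convergence implies weak convergence and that geodesics in a Hadamard space vary continuously (indeed $1$-Lipschitzly) with their endpoints.
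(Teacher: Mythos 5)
Your proof is correct, but it takes a genuinely different route from the paper's. The paper disposes of the proposition in one line: it notes that $M-\lim_nC_n=C$ means $M-\lim_n\iota_{C_n}=\iota_C$, that each $\iota_{C_n}$ is closed and convex, and then invokes as a black box the general fact that Mosco convergence preserves convexity and lower semicontinuity, so $\iota_C$ is a closed convex function and hence $C$ is a closed convex set. You instead argue at the level of sets: you exploit the $\{0,+\infty\}$ dichotomy of indicator functions to convert condition (ii) into eventual membership of recovery sequences in the $C_n$, and condition (i) into a membership test for (weak) limits of such sequences, and then you carry out the diagonalization and padding explicitly for closedness and use convexity of the metric in a $\CAT(0)$ space for convexity. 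Your argument is self-contained and in effect re-proves, in the special case of indicators, the preservation lemma the paper merely cites; the paper's version is shorter but defers all the work to that lemma. The bookkeeping you worry about is handled correctly: since indicators are nonnegative, membership in $C_{n_k}$ along a subsequence already forces $\liminf_n\iota_{C_n}(w_n)=0$ for the padded full sequence, which is all that condition (i) needs, and the estimate $d(m_n,m)\leqslant(1-t)d(x_n,x)+t\,d(y_n,y)$ is the standard convexity of the metric in $\CAT(0)$ spaces.
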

\begin{proof}
 By definition $M-\lim_nC_n=C$ means $M-\lim_n\iota_{C_n}=\iota_{C}$. $C_n$ is convex and closed for all $n$ implies that the indicator function $\iota_{C_n}$ is closed convex for all $n$. But Mosco convergence preserves convexity and lower semicontinuity therefore $\iota_C$ is a closed convex function. This is equivalent to $C$ being a closed convex set.
\end{proof}

\begin{proposition}\cite[Example 5.2.7]{Bacak}
\label{prop2}
 Let $(C_n)_{n\in\mathbb{N}}$ be a sequence of closed convex subsets of $H$. If $C_n$ is a nonincreasing sequence then $(C_n)$ Mosco converges to its intersection. If $(C_n)$ is nondecreasing then it Mosco converges to the closure of its union. 
\end{proposition}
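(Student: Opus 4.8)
The plan is to reduce Mosco convergence of sets to its purely set-theoretic content and then check it by hand in each monotone case. Write $C:=\bigcap_nC_n$ in the nonincreasing case and $C:=\overline{\bigcup_nC_n}$ in the nondecreasing case; I must show $M-\lim_n\iota_{C_n}=\iota_C$. First I would record what conditions (i) and (ii) of Mosco convergence say for indicator functions. Since $\iota_{C_n},\iota_C$ take only the values $0$ and $+\infty$, condition (i) at a point $x$ is vacuous when $x\in C$, while for $x\notin C$ it forces $\liminf_n\iota_{C_n}(x_n)=+\infty$ for every $x_n\overset{w}{\to}x$; this can only fail when $x_{n_k}\in C_{n_k}$ along some subsequence. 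Hence condition (i) for all $x$ amounts to: whenever $x_{n_k}\overset{w}{\to}x$ with $x_{n_k}\in C_{n_k}$, then $x\in C$. Dually, condition (ii) at $x$ is vacuous when $x\notin C$ (take $y_n=x$), and for $x\in C$ it says there exist $y_n\to x$ with $y_n\in C_n$ for all large $n$. Throughout I would use the standard fact that if a sequence lies in a closed convex subset of a Hadamard space and converges weakly, then its weak limit lies in that subset, that is, closed convex sets are weakly closed (see \cite[Chapter 2]{Berdellima}).

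Nonincreasing case, $C=\bigcap_nC_n$. Condition (ii) is immediate: if $x\in C$ then $x\in C_n$ for every $n$, so $y_n:=x$ works. For condition (i), suppose $x_{n_k}\overset{w}{\to}x$ with $x_{n_k}\in C_{n_k}$ and fix $m$. For all large $k$ we have $n_k\geqslant m$, hence $x_{n_k}\in C_{n_k}\subseteq C_m$ by monotonicity; since $C_m$ is closed and convex, hence weakly closed, $x\in C_m$. As $m$ is arbitrary, $x\in\bigcap_mC_m=C$.

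Nondecreasing case, $C=\overline{\bigcup_nC_n}$. Here $\bigcup_nC_n$ is convex, being a nested union of convex sets, and the closure of a convex set in a $\CAT(0)$ space is again convex, so $C$ is closed and convex. Condition (i) is then easy: if $x_{n_k}\overset{w}{\to}x$ with $x_{n_k}\in C_{n_k}$, then $x_{n_k}\in\bigcup_nC_n\subseteq C$ for every $k$, so $x\in C$. For condition (ii), take $x\in C$ and choose $w_j\in\bigcup_nC_n$ with $d(w_j,x)\to0$, say $w_j\in C_{p_j}$. If the indices $p_j$ stay bounded by some $P$, then every $w_j\in C_P$, so $x\in C_P$ and $y_n:=x$ works. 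Otherwise, passing to a subsequence we may assume $p_1<p_2<\cdots$; set $y_n:=w_{j(n)}$ for $n\geqslant p_1$, where $j(n):=\max\{j:p_j\leqslant n\}$, and $y_n:=w_1$ for $n<p_1$. Monotonicity gives $y_n=w_{j(n)}\in C_{p_{j(n)}}\subseteq C_n$ for $n\geqslant p_1$, while $j(n)\to\infty$ as $n\to\infty$ forces $d(y_n,x)\to0$.

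I expect the real content to lie in condition (i): it is a statement about \emph{weak} convergence, so it cannot be dispatched by soft arguments on unions and intersections alone --- one genuinely needs weak closedness of closed convex sets, and in the nondecreasing case one must first know that the candidate limit $\overline{\bigcup_nC_n}$ is convex before that fact applies. Condition (ii), by contrast, only involves the metric topology and reduces to the elementary staircase construction above, the only delicate point being the degenerate subcase in which the approximating indices $p_j$ remain bounded.
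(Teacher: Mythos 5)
Your proof is correct and follows essentially the same route as the paper: reduce Mosco convergence of indicators to set-theoretic conditions, use weak closedness of closed convex sets for condition (i), and use constant (respectively staircase) sequences for condition (ii). The only difference is one of completeness --- you carry out the nondecreasing case in full (including the convexity of $\overline{\bigcup_nC_n}$ and the bounded-index subcase), whereas the paper dismisses it with ``analogue arguments.''
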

\begin{proof}
 The proof follows the lines in \cite[Lemma 1.2, Lemma 1.3]{Mosco}. Let $C_n$ be a nonincreasing sequence of closed convex sets and $C:=\bigcap_kC_k$. 
 By definition it is suffices to prove that $M-\lim_n\iota_{C_n}=\iota_C$. 
 Let $(x_n)_{n\in\mathbb{N}}$ be a sequence such that $x_n\in C_n$ for all $n$ and $\lim_nx_n=x$. Then $\iota_{C_n}(x_n)=0$ for all $n$ implies $\limsup_n\iota_{C_n}(x_n)=0\leqslant\iota_C(x)$ confirming condition \eqref{ii}. Now let $(x_n)_{n\in\mathbb{N}}$ be such that $x_n\in C_n$ for all $w-\lim_nx_n=x$.  
 Assumption $C_n\subseteq C_m$ whenever $m\leqslant n$ implies that $x_n\in C_m$ for all $n\geqslant m$. But $C_m$ is a closed convex set hence by \cite[Lemma 3.1]{Bacak4} it follows that $C_m$ is weakly closed. Therefore $x=w-\lim_nx_n\in C_m$ and this holds for any $m\in\mathbb{N}$ since $m$ was arbitrary. This means that $x\in\bigcap_kC_k$ implying $\iota_C(x)=0\leqslant\liminf_n\iota_{C_n}(x_n)$ confirming condition \eqref{i}. Analogue arguments for the second statement. 
\end{proof}

\subsection{A theorem of Attouch}
Let $X$ be a \textit{normed linear} space and $f:X\to(-\infty,+\infty]$ a proper closed convex function. For $\lambda>0$ the \textit{Moreau-Yosida approximate} of $f$ is defined as 
\begin{equation}
 \label{eq:MYosida}
 f_{\lambda}(x):=\inf_{y\in X}\Big\{f(y)+\frac{1}{2\lambda}\|x-y\|^2\Big\}.
\end{equation}
It can be shown that $f_{\lambda}$ is a convex continuous function \cite[Proposition 3.3]{Attouch}. Moreover $\lim_{\lambda\to 0}f_{\lambda}(x)=f(x)$ for all $x\in X$. For a given parameter $\lambda>0$ the \textit{proximal mapping} of $f$ is defined as 
\begin{equation}
 \label{eq:proximal mapping}
 J_{\lambda}x:=\arg\min_{y\in X}\Big\{f(y)+\frac{1}{2\lambda}\|x-y\|^2\Big\}.
\end{equation}
For a function $f:X\to(-\infty,+\infty]$ let $\partial f(x)$ denote the subdifferential of $f$ at $x\in X$ 
\begin{equation}
 \label{eq:subdifferential}
 \partial f(x):=\{u\in X^*| f(x)\geqslant f(y)+\langle u,y-x\rangle,\forall y\in X\}.
\end{equation}
We say a pair $(x,u)\in X\times X^*$ lies in $\partial f$ whenever $u\in \partial f(x)$. For more on fundamental concepts in convex analysis in linear spaces refer to the classical book by Rockafellar \cite{Rockafellar}. A celebrated result in the theory of Mosco convergence is the following theorem of Attouch. 
\begin{thm}\cite[Attouch's Theorem]{Attouch}
\label{Attouch}
Let $X$ be a smooth reflexive Banach space. Let $(f^n)_{n\in\mathbb{N}},f$ be a sequence of proper closed convex functions from $X$ into $(-\infty,+\infty]$. The following equivalences hold:
 \begin{enumerate}[(i)]
  \item $M-\lim_nf^n=f$ 
  \item $\forall \lambda>0,\forall x\in X$ it holds $\lim_nJ^n_{\lambda}x=J_{\lambda}x$ and $\exists (u,v)\in\partial f, \exists (u_n,v_n)\in\partial f^n$ such that $\lim_nu_n=u$ in $X$, $\lim_nv_n=v$ in $X^*$, and $\lim_nf^n(u_n)=f(u)$
  \item $\forall\lambda>0,\forall x\in X$ it holds $\lim_nf^n_{\lambda}(x)=f_{\lambda}(x)$.
 \end{enumerate}
\end{thm}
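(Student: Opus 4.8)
The plan is to establish the cycle (i)$\Rightarrow$(ii)$\Rightarrow$(iii)$\Rightarrow$(i), exploiting throughout the correspondence between a proper closed convex function $g$ on the smooth reflexive space $X$, its Moreau--Yosida approximate $g_\lambda$, and its subdifferential $\partial g$, which is a maximal monotone operator. I would first record three facts. With $J\colon X\to X^*$ the duality map (single-valued since $X$ is smooth), the proximal point is characterized by $\tfrac1\lambda J(x-J_\lambda x)\in\partial g(J_\lambda x)$, so that $(J_\lambda x,\tfrac1\lambda J(x-J_\lambda x))$ is a point of the graph of $\partial g$. The envelope $g_\lambda$ is convex and G\^ateaux differentiable with $\nabla g_\lambda(x)=\tfrac1\lambda J(x-J_\lambda x)$, it satisfies $g_\lambda(x)=g(J_\lambda x)+\tfrac1{2\lambda}\|x-J_\lambda x\|^2$, and $\tfrac{\partial}{\partial\lambda}g_\lambda(x)=-\tfrac1{2\lambda^2}\|x-J_\lambda x\|^2$. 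Finally, the Br{\o}ndsted--Rockafellar theorem converts approximate subgradients into exact subgradients at nearby points.

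For (i)$\Rightarrow$(ii) I would pass through the subdifferentials. From $M\text{-}\lim_n f^n=f$ one deduces that the graphs of $\partial f^n$ converge to that of $\partial f$ in the appropriate Mosco sense: every $(u,v)\in\partial f$ is approximated in $X\times X^*$ by points $(u_n,v_n)\in\partial f^n$, and, conversely, the graph of $\partial f$ is closed under the relevant weak--strong limits of points of the $\partial f^n$. The first, harder, inclusion I would obtain by applying the two defining properties of Mosco convergence to a recovery sequence for $u$ together with the envelopes $f^n_\mu$, producing approximate subgradients of $f^n$ near $u$ with value near $v$ and then regularizing via Br{\o}ndsted--Rockafellar; the second uses the lower semicontinuity inequality in the definition of Mosco convergence, the subgradient inequality, and weak lower semicontinuity of the norm. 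Graph convergence of these maximal monotone operators then yields, for every $\lambda>0$, the pointwise convergence $J^n_\lambda x\to J_\lambda x$ by the classical monotonicity argument of Br\'ezis--Crandall--Pazy applied to $z_n:=J^n_\lambda x$. Finally, fixing any $(u,v)\in\partial f$ and choosing $(u_n,v_n)\in\partial f^n$ converging to it, the recovery sequence gives $\limsup_n f^n(u_n)\le f(u)$ while the lower semicontinuity condition gives $\liminf_n f^n(u_n)\ge f(u)$, hence $f^n(u_n)\to f(u)$, which is the required normalization. I expect this implication, and within it the upgrade from the weak convergence that Mosco convergence naturally supplies to strong convergence of the proximal points, to be the main obstacle; this is exactly where smoothness and reflexivity of $X$ must be used in full, so that $J$ and $J^{-1}$ are single-valued with adequate continuity and bounded sequences have weakly convergent subsequences.

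For (ii)$\Rightarrow$(iii), write $f^n_\lambda(x)=f^n(J^n_\lambda x)+\tfrac1{2\lambda}\|x-J^n_\lambda x\|^2$; the quadratic term converges because $J^n_\lambda x\to J_\lambda x$, so only $f^n(J^n_\lambda x)$ needs to be controlled. I would first pin down convergence of the envelopes at one reference argument using the normalization: from $v_n\in\partial f^n(u_n)$ one obtains the sandwich $f^n(u_n)-\tfrac\lambda2\|v_n\|_*^2\le f^n_\lambda(u_n)\le f^n(u_n)$, which together with $f^n(u_n)\to f(u)$ and the identity $f^n_\lambda(u_n)-f^n_\mu(u_n)=-\int_\mu^\lambda\tfrac1{2s^2}\|u_n-J^n_s u_n\|^2\,ds$ (integrand pointwise convergent in $s$ and dominated, using resolvent convergence and the affine minorant supplied by the anchor) yields $f^n_\lambda(u)\to f_\lambda(u)$ for every $\lambda$. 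Then integrating the gradient of the smooth convex function $f^n_\lambda$ along the segment from $u_n$ to $x$, namely $f^n_\lambda(x)-f^n_\lambda(u_n)=\int_0^1\langle\nabla f^n_\lambda(u_n+t(x-u_n)),x-u_n\rangle\,dt$, the integrands converge pointwise to $\langle\nabla f_\lambda(u+t(x-u)),x-u\rangle$ and are dominated, so dominated convergence gives $f^n_\lambda(x)\to f_\lambda(x)$.

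For (iii)$\Rightarrow$(i) I would first recover, by running the envelope--subdifferential dictionary in reverse, that $J^n_\lambda x\to J_\lambda x$ for all $\lambda>0$ and that the normalization holds (so in effect (iii)$\Rightarrow$(ii)), and then conclude. For the recovery property, set $y_n:=J^n_{\lambda_n}x$ with $\lambda_n\downarrow0$ chosen diagonally: then $y_n\to x$, and $f^n(y_n)=f^n_{\lambda_n}(x)-\tfrac1{2\lambda_n}\|x-y_n\|^2$, whose $\limsup$ is at most $f(x)$ after the diagonal choice. For the lower semicontinuity property, if $x_n$ converges weakly to $x$ then $f^n(x_n)\ge f^n_\lambda(x_n)\ge f^n_\lambda(x)+\langle\nabla f^n_\lambda(x),x_n-x\rangle$; letting $n\to\infty$ (here $\nabla f^n_\lambda(x)=\tfrac1\lambda J(x-J^n_\lambda x)\to\tfrac1\lambda J(x-J_\lambda x)$ strongly, so the pairing against the weakly null $x_n-x$ vanishes) gives $\liminf_n f^n(x_n)\ge f_\lambda(x)$, and letting $\lambda\downarrow0$ gives $\liminf_n f^n(x_n)\ge f(x)$, as required. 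The manipulations relating envelopes and subdifferentials in this last step and in (ii)$\Rightarrow$(iii) are comparatively routine, resting on the identities above together with dominated convergence.
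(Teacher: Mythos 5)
The paper does not prove this theorem: it is stated as a quotation from Attouch's monograph \cite{Attouch} (with the Hilbert-space original in \cite{Attouch1}), so there is no in-paper argument to measure your proposal against. Your cycle (i)$\Rightarrow$(ii)$\Rightarrow$(iii)$\Rightarrow$(i), routed through graph convergence of the subdifferentials, Br{\o}ndsted--Rockafellar regularization, the resolvent/envelope identities, and integration of $\nabla f^n_{\lambda}$ along segments, is precisely the classical skeleton of Attouch's own proof, and the three preliminary facts you record are the right ones. So as a blueprint the proposal is sound and follows the standard route.

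That said, the steps you compress into single sentences are where the theorem actually lives, and one of them conceals a genuine gap as written. In (iii)$\Rightarrow$(i) you assert that $J^n_{\lambda}x\to J_{\lambda}x$ strongly ``by running the dictionary in reverse.'' What the $\lambda$-derivative formula actually extracts from $f^n_{\lambda}(x)\to f_{\lambda}(x)$ is convergence of the norms $\|x-J^n_{\lambda}x\|\to\|x-J_{\lambda}x\|$; combining this with weak subsequential convergence of $J^n_{\lambda}x$ to reach strong convergence requires a Kadec--Klee-type property of the norm (available after an equivalent renorming of a reflexive space, but it must be invoked), and norm-to-norm continuity of the duality map $J$ needs more than Gâteaux smoothness. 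This cannot be waved through: without strong convergence of $\nabla f^n_{\lambda}(x)$ the pairing $\langle\nabla f^n_{\lambda}(x),x_n-x\rangle$ against a weakly null sequence need not vanish, and pointwise convergence of equi-Lipschitz convex functions alone does not yield the Mosco liminf inequality --- take $g^n=\langle e_n,\cdot\rangle$ on $\ell^2$ and $x_n=-e_n$, where $\liminf_n g^n(x_n)=-1<0$. Similarly, in (i)$\Rightarrow$(ii) the production of exact subgradients $(u_n,v_n)\to(u,v)$ strongly in $X\times X^*$, and the Br\'ezis--Crandall--Pazy passage from graph convergence to strong convergence of the (non-nonexpansive, in a Banach space) resolvents, are each full lemmas that your sketch names but does not carry out. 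In short: correct architecture, identical to the cited source's, but an outline rather than a proof at exactly the points where the reflexivity and smoothness hypotheses have to do their work.
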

Theorem \ref{Attouch} appeared first in \cite{Attouch1} for Hilbert spaces and then generalized for any smooth reflexive Banach space in \cite{Attouch}. 

\subsection{A theorem of Ba\v cak}
Because a norm $\|\cdot\|$ in a linear space $X$ induces a metric $d(x,y)=\|x-y\|$ for any $x,y\in X$ then definitions \eqref{eq:MYosida} and \eqref{eq:proximal mapping} can be accommodated easily in the setting of a Hadamard space using its metric. 
For a given closed convex function $f:H\to(-\infty,+\infty]$ and parameter $\lambda>0$ the {\em Moreau approximate} $f_{\lambda}$ of $f$ is defined as 
\begin{equation}
\label{eq:MY}
f_{\lambda}(x):=\inf_{y\in H}\Big\{f(y)+\frac{1}{2\lambda}d(y,x)^2\Big\},\hspace{0.3cm}\text{for each}\hspace{0.3cm} x\in H
\end{equation}
and the proximal mapping of $f$ 
\begin{equation}
\label{eq:R}
J_{\lambda}x:=\arg\min _{y\in H}\Big\{f(y)+\frac{1}{2\lambda}d(y,x)^2\Big\},\hspace{0.3cm}\text{for each}\hspace{0.3cm} x\in H.
\end{equation} 
In his study of the gradient flow in Hadamard spaces \cite{Bacak} Ba\v cak established a result which relates Mosco convergence of a sequence of closed convex functions $(f^n)_{n\in\mathbb{N}}$ to the pointwise convergence of Moreau approximates $(f^n_{\lambda})_{n\in\mathbb{N}}$ and proximal mappings $(J^n_{\lambda})_{n\in\mathbb{N}}$. 
\begin{thm}(Ba\v cak)
\label{Bacak}
Let $(H,d)$ be a Hadamard space and $f^n:H\to(-\infty,+\infty]$ a sequence of closed convex functions. If $M-\lim_nf^n(x)=f(x)$, then $\lim_n f^n_{\lambda}(x)=f_{\lambda}(x)$ and $\lim_n J^n_{\lambda}x=J_{\lambda}x$ for each $x\in H$.
\end{thm}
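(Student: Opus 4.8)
The plan is to combine the strong convexity of the maps $y\mapsto f^{n}(y)+\tfrac{1}{2\lambda}d(y,x)^{2}$ with the two halves of Mosco convergence, applied one at a time. First I would collect the standing facts. In a Hadamard space $y\mapsto\tfrac12 d(y,x)^{2}$ is $1$-strongly convex along geodesics, so $y\mapsto f^{n}(y)+\tfrac{1}{2\lambda}d(y,x)^{2}$ is proper, lower semicontinuous and $\tfrac1\lambda$-strongly convex; hence it has a unique minimizer $J^{n}_{\lambda}x$, so that $f^{n}_{\lambda}(x)\in\mathbb{R}$ is attained, and strong convexity yields
\[
f^{n}(y)+\tfrac{1}{2\lambda}d(y,x)^{2}\ \geqslant\ f^{n}_{\lambda}(x)+\tfrac{1}{2\lambda}\,d\!\left(y,J^{n}_{\lambda}x\right)^{2}\qquad(y\in H),
\]
and likewise for $f$, with minimizer $m:=J_{\lambda}x$ and $f_{\lambda}(x)=f(m)+\tfrac{1}{2\lambda}d(m,x)^{2}$ (all standard, cf.\ \cite{Bacak}). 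I would also use that the closed convex function $d(\cdot,x)^{2}$ is weakly lower semicontinuous, since its sublevel sets are closed and convex, hence weakly closed by \cite[Lemma 3.1]{Bacak4}, and that bounded sequences admit weakly convergent subsequences with unique weak limits.

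For the upper bound, apply \eqref{ii} at $m$: there is a sequence $y_{n}\to m$ with $\limsup_{n}f^{n}(y_{n})\leqslant f(m)$. As $y_{n}$ competes in the infimum defining $f^{n}_{\lambda}(x)$, we get $f^{n}_{\lambda}(x)\leqslant f^{n}(y_{n})+\tfrac{1}{2\lambda}d(y_{n},x)^{2}$, and letting $n\to\infty$ (with $d(y_{n},x)\to d(m,x)$) gives $\limsup_{n}f^{n}_{\lambda}(x)\leqslant f(m)+\tfrac{1}{2\lambda}d(m,x)^{2}=f_{\lambda}(x)$. Applying \eqref{i} to $y_{n}\to m$ gives $f(m)\leqslant\liminf_{n}f^{n}(y_{n})$, so in fact $f^{n}(y_{n})\to f(m)$; I will use this below.

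For the lower bound I would prove $\ell:=\liminf_{n}f^{n}_{\lambda}(x)\geqslant f_{\lambda}(x)$; we may assume $\ell<+\infty$ and pass to a subsequence along which $f^{n}_{\lambda}(x)\to\ell$. The crux is that $\ell>-\infty$. Indeed, if $f^{n}_{\lambda}(x)\to-\infty$, then either $(J^{n}_{\lambda}x)$ has a bounded subsequence, along which $f^{n}(J^{n}_{\lambda}x)=f^{n}_{\lambda}(x)-\tfrac{1}{2\lambda}d(J^{n}_{\lambda}x,x)^{2}\to-\infty$, contradicting \eqref{i} and the properness of $f$; or $d(J^{n}_{\lambda}x,x)\to\infty$, and then the point $w_{n}$ on the geodesic from $y_{n}$ to $J^{n}_{\lambda}x$ at distance one from $y_{n}$ stays bounded (it lies within distance $1+o(1)$ of $m$) while convexity of $f^{n}$ together with $f^{n}(J^{n}_{\lambda}x)=f^{n}_{\lambda}(x)-\tfrac{1}{2\lambda}d(J^{n}_{\lambda}x,x)^{2}$ forces $f^{n}(w_{n})\to-\infty$, again contradicting \eqref{i} and properness; alternatively one may quote the equi-lower-boundedness of Mosco convergent sequences on bounded sets. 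Granting $\ell\in\mathbb{R}$, the strong convexity estimate at $y=y_{n}$ reads $d(y_{n},J^{n}_{\lambda}x)^{2}\leqslant 2\lambda\big(f^{n}(y_{n})+\tfrac{1}{2\lambda}d(y_{n},x)^{2}-f^{n}_{\lambda}(x)\big)$, whose right-hand side is bounded; hence $(J^{n}_{\lambda}x)$ is bounded and a weakly convergent subsequence $J^{n_{k}}_{\lambda}x\overset{w}\to z$ satisfies, using \eqref{i}, weak lower semicontinuity of $d(\cdot,x)^{2}$, and superadditivity of $\liminf$,
\[
f_{\lambda}(x)\ \leqslant\ f(z)+\tfrac{1}{2\lambda}d(z,x)^{2}\ \leqslant\ \liminf_{k}\Big(f^{n_{k}}(J^{n_{k}}_{\lambda}x)+\tfrac{1}{2\lambda}d(J^{n_{k}}_{\lambda}x,x)^{2}\Big)\ =\ \liminf_{k}f^{n_{k}}_{\lambda}(x)\ =\ \ell.
\]
Together with the upper bound, this gives $\lim_{n}f^{n}_{\lambda}(x)=f_{\lambda}(x)$ for every $x\in H$.

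Finally, feeding $f^{n}_{\lambda}(x)\to f_{\lambda}(x)$ back into the strong convexity estimate at the recovery sequence gives
\[
\tfrac{1}{2\lambda}\,d\!\left(y_{n},J^{n}_{\lambda}x\right)^{2}\ \leqslant\ f^{n}(y_{n})+\tfrac{1}{2\lambda}d(y_{n},x)^{2}-f^{n}_{\lambda}(x)\ \longrightarrow\ f(m)+\tfrac{1}{2\lambda}d(m,x)^{2}-f_{\lambda}(x)=0,
\]
so $d(y_{n},J^{n}_{\lambda}x)\to0$ and hence $d(J^{n}_{\lambda}x,m)\leqslant d(J^{n}_{\lambda}x,y_{n})+d(y_{n},m)\to0$, that is $J^{n}_{\lambda}x\to J_{\lambda}x$. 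I expect the main obstacle to be the lower bound, and within it the exclusion of $f^{n}_{\lambda}(x)\to-\infty$: this is the only step where condition \eqref{i}, weak sequential compactness, and the lower boundedness of convex lower semicontinuous functions on bounded sets all come into play, and it is precisely what makes the boundedness---and hence the convergence---of the resolvents $(J^{n}_{\lambda}x)$ available.
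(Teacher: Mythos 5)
The paper does not prove this theorem; it is imported verbatim from Ba\v cak \cite{Bacak}, so there is no in-paper proof to compare against. Your argument is correct and is essentially the standard one: the recovery sequence from \eqref{ii} gives the $\limsup$ bound, weak sequential compactness of the bounded resolvent sequence together with \eqref{i} and weak lower semicontinuity of $d(\cdot,x)^2$ gives the $\liminf$ bound, and the $\frac{1}{\lambda}$-strong convexity inequality (Proposition \ref{prop1}) upgrades $f^n_{\lambda}(x)\to f_{\lambda}(x)$ to $J^n_{\lambda}x\to J_{\lambda}x$; your exclusion of $f^n_{\lambda}(x)\to-\infty$ via the auxiliary points $w_n$ on the geodesics is a legitimate self-contained substitute for the usual equi-minorization lemma for Mosco convergent sequences.
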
 
This result is the analogue of the implication $(i)\to(iii)$ in Theorem \ref{Attouch}. Later Ba\v cak et al. \cite{Bacak2} proved the following. 
\begin{thm}
\label{Bacak2}
Let $(H,d)$ be a Hadamard space and $f, f^n:H\to(-\infty,+\infty]$ be a sequence of closed convex functions. If $\lim_n f^n_{\lambda}(x)=f_{\lambda}(x)$ then $M-\lim_nf^n(x)=f(x)$ for all $x\in H$.
\end{thm}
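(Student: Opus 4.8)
The plan is to verify the two defining conditions \eqref{i} and \eqref{ii} of Mosco convergence directly, using only elementary properties of Moreau approximates and resolvents. Recall that for a proper closed convex $g:H\to(-\infty,+\infty]$ and $\lambda>0$, writing $J^g_\lambda$ for the resolvent of $g$, one has $g_\lambda(x)=g(J^g_\lambda x)+\frac1{2\lambda}d(J^g_\lambda x,x)^2\le g(y)+\frac1{2\lambda}d(y,x)^2$ for all $y\in H$; the variational inequality $g(y)+\frac1{2\lambda}d(y,x)^2\ge g_\lambda(x)+\frac1{2\lambda}d(J^g_\lambda x,y)^2$ for all $y\in H$; $J^g_\lambda$ is nonexpansive; $g(x)=\sup_{\lambda>0}g_\lambda(x)=\lim_{\lambda\to0^+}g_\lambda(x)$; and, inserting $J^g_\lambda x$ as a competitor into $g_\mu$, the two--parameter estimate $d(J^g_\lambda x,x)^2\le\frac{2\lambda\mu}{\mu-\lambda}\bigl(g_\lambda(x)-g_\mu(x)\bigr)$ for $0<\lambda<\mu$. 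We also use that a convex lower semicontinuous function on a Hadamard space is weakly lower semicontinuous \cite[Lemma 3.1]{Bacak4}, and that bounded sequences have weakly convergent subsequences. Here "the hypothesis" means $f^n_\lambda(x)\to f_\lambda(x)$ for all $\lambda>0$ and all $x\in H$.

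\emph{Step 1 (condition \eqref{ii}).} Fix $x\in H$; if $f(x)=+\infty$ put $y_n\equiv x$. If $f(x)<\infty$, fix $\mu>0$. For $0<\lambda<\mu$ the two--parameter estimate gives $\limsup_n d(J^n_\lambda x,x)^2\le\frac{2\lambda\mu}{\mu-\lambda}\bigl(f_\lambda(x)-f_\mu(x)\bigr)=:\varepsilon(\lambda)$, and $\varepsilon(\lambda)\to0$ as $\lambda\to0^+$ since $f_\lambda(x)\uparrow f(x)<\infty$; moreover $f^n(J^n_\lambda x)\le f^n_\lambda(x)\to f_\lambda(x)\le f(x)$. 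Choosing $\lambda_k\downarrow0$ and then integers $N_1<N_2<\cdots$ with $d(J^n_{\lambda_k}x,x)^2\le\varepsilon(\lambda_k)+\frac1k$ and $f^n(J^n_{\lambda_k}x)\le f(x)+\frac1k$ for $n\ge N_k$, the diagonal sequence $y_n:=J^n_{\lambda_k}x$ for $N_k\le n<N_{k+1}$ (and $y_n:=x$ for $n<N_1$) satisfies $y_n\to x$ and $\limsup_n f^n(y_n)\le f(x)$.

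\emph{Step 2 (strong convergence of resolvents).} The hypothesis forces $J^n_\lambda x\to J_\lambda x$ strongly and $f^n(J^n_\lambda x)\to f(J_\lambda x)$ for every $\lambda>0$, $x\in H$. Indeed $J_\lambda x\in\dom f$, so Step 1 gives $u_n\to J_\lambda x$ with $\limsup_n f^n(u_n)\le f(J_\lambda x)$; from $f^n_\lambda(x)\le f^n(u_n)+\frac1{2\lambda}d(u_n,x)^2$, $f^n_\lambda(x)\to f_\lambda(x)$ and $d(u_n,x)\to d(J_\lambda x,x)$ we get $\liminf_n f^n(u_n)\ge f_\lambda(x)-\frac1{2\lambda}d(J_\lambda x,x)^2=f(J_\lambda x)$, hence $f^n(u_n)\to f(J_\lambda x)$. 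Rearranging the variational inequality for $J^n_\lambda x$ at $y=u_n$ gives $f^n(u_n)-f^n_\lambda(x)\ge\frac1{2\lambda}\bigl(d(J^n_\lambda x,u_n)^2-d(u_n,x)^2\bigr)$; letting $n\to\infty$, the left side tends to $f(J_\lambda x)-f_\lambda(x)=-\frac1{2\lambda}d(J_\lambda x,x)^2$ and $d(u_n,x)^2\to d(J_\lambda x,x)^2$, so $\limsup_n d(J^n_\lambda x,u_n)^2\le0$. Hence $d(J^n_\lambda x,u_n)\to0$, so $J^n_\lambda x\to J_\lambda x$, and then $f^n(J^n_\lambda x)=f^n_\lambda(x)-\frac1{2\lambda}d(J^n_\lambda x,x)^2\to f(J_\lambda x)$.

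\emph{Step 3 (condition \eqref{i}, and the main obstacle).} Let $x_n\overset w\to x$; we must show $f(x)\le L:=\liminf_n f^n(x_n)$, and may assume $L<\infty$ and (passing to a subsequence) $f^n(x_n)\to L$. Fix $\lambda>0$ and $v\in\dom f$, set $p:=J_\lambda v$; by Step 2, $J^n_\lambda v\to p$ strongly and $f^n(J^n_\lambda v)\to f(p)$. The variational inequality for $J^n_\lambda v$ at $y=x_n$ reads
$$f^n(x_n)\ \ge\ f^n(J^n_\lambda v)+\tfrac1{2\lambda}\Bigl(d(J^n_\lambda v,v)^2+d(J^n_\lambda v,x_n)^2-d(x_n,v)^2\Bigr).$$
Since $J^n_\lambda v\to p$ strongly and $(x_n)$ is bounded, $d(J^n_\lambda v,x_n)^2=d(p,x_n)^2+o(1)$, so taking $\liminf_n$ and using Step 2 yields $L\ge f(p)+\frac1{2\lambda}d(p,v)^2+\frac1{2\lambda}\liminf_n\bigl(d(p,x_n)^2-d(v,x_n)^2\bigr)$. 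Granting the inequality $\liminf_n\bigl(d(p,x_n)^2-d(v,x_n)^2\bigr)\ge d(p,x)^2-d(v,x)^2$, the choice $v=x$, $p=J_\lambda x$ (valid when $f(x)<\infty$; the case $f(x)=+\infty$ being recovered by letting $v$ range over $\dom f$) gives $L\ge f(J_\lambda x)+\frac1{\lambda}d(J_\lambda x,x)^2=2f_\lambda(x)-f(J_\lambda x)$, and letting $\lambda\to0^+$ (so $f_\lambda(x)\to f(x)$ and $f(J_\lambda x)\to f(x)$) yields $L\ge f(x)$. The estimate $\liminf_n\bigl(d(p,x_n)^2-d(v,x_n)^2\bigr)\ge d(p,x)^2-d(v,x)^2$ is, however, \emph{false} for arbitrary $p,v$ and arbitrary weakly convergent $(x_n)$: equivalently, $y\mapsto d(p,y)^2-d(v,y)^2$ need not be weakly upper semicontinuous in a $\CAT(0)$ space. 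This is exactly where the argument must use (a) that $p=J_\lambda v$, so that firm nonexpansiveness of the resolvent constrains the relevant geometry, and (b) that $L<\infty$, which via the two--parameter estimate bounds the proximal points $J^n_\mu x_n$ and thereby forces $(x_n)$ to lie asymptotically in the effective domain — together with the characterisation of weak convergence by asymptotic centres (which already gives $\limsup_n d(a,x_n)^2\ge\limsup_n d(x,x_n)^2+d(a,x)^2$ for every $a$), and possibly a replacement of the fixed $v$ by a $v_n$ tied to $x_n$. Carrying out this cross--term control is the technical heart of the proof; everything preceding it is soft, and this is the step I expect to be the main obstacle.
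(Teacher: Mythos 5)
First, note that the paper offers no proof of this statement: Theorem \ref{Bacak2} is quoted from Ba\v cak et al.\ \cite{Bacak2} without argument, so there is no in-paper proof to compare your attempt against. Judged on its own terms, your proposal is incomplete. Steps 1 and 2 are correct and cleanly executed: the two-parameter estimate plus a diagonal extraction gives the recovery-sequence condition \eqref{ii}, and the variational inequality then yields strong convergence of the resolvents and of $f^n(J^n_\lambda x)$. But condition \eqref{i} --- the substantive half of Mosco convergence --- is exactly the part you leave open in Step 3, so as written this is not a proof.

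The gap is, however, closable with a tool you already name. You only ever need the cross-term estimate for $v=x$ (there is no reason to restrict to $v\in\dom f$: the variational inequality for $J^n_\lambda v$ holds for every $v\in H$), and in that case the required inequality
$$\liminf_n\bigl(d(p,x_n)^2-d(x,x_n)^2\bigr)\ \ge\ d(p,x)^2$$
is true. Indeed, pass to a subsequence along which the $\liminf$ is attained and along which $d(p,x_{n_k})^2\to\alpha$ and $d(x,x_{n_k})^2\to\beta$ (the sequence is bounded); this subsequence still converges weakly to $x$, so the asymptotic-centre inequality you quote, applied to that subsequence, gives $\alpha\ge\beta+d(p,x)^2$, which is exactly the claim. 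Your objection that $y\mapsto d(p,y)^2-d(v,y)^2$ need not be weakly upper semicontinuous is correct for general $v$, but irrelevant once $v$ is the weak limit $x$ itself. Feeding this into your own display with $v=x$, $p=J_\lambda x$ gives $L\ge f(J_\lambda x)+\frac{1}{\lambda}d(J_\lambda x,x)^2\ge f_\lambda(x)$, and letting $\lambda\downarrow0$ yields $L\ge f(x)$ in all cases, including $f(x)=+\infty$. With that paragraph supplied, the argument is complete; without it, Step 3 remains a genuine gap.
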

This result together with Theorem \ref{Bacak2} imply the equivalence between Mosco convergence and pointwise convergence of  Moreau approximates in Hadamard spaces. This completes the equivalence $(i)\leftrightarrow(iii)$ in Theorem \ref{Attouch} for Hadamard spaces.
However it is not known whether convergence of proximal mappings imply, under some additional conditions, the Mosco convergence of $f^n$. This was left an open question by Ba\v cak \cite{Bacak}. That convergence of proximal mappings only is not enough was noted by Ba\v cak in \cite{Bacak3}. Indeed consider a sequence of constant functions $0,1,0,1,...$ defined on $\mathbb{R}$. Evidently they are closed and convex but they don't converge in the sense of Mosco to any function $f$. However their proximal mapping maps $J_{\lambda}:\mathbb{R}\to \mathbb{R}$ (i.e. $x\mapsto J_{\lambda}x$) equal the identity map for all $\lambda>0$.  In this note we aim to complete the cycle of equivalences, the analogues of Attouch's theorem. This also answers an open question in \cite{Bacak3}.

\section{Asymptotic Boundedness for the Slope of a Sequence of Functions}

\subsection{Some preliminaries}
For a given function $f$ let $\dom f$ denote its effective domain i.e. $\dom f:=\{x\in H\quad|\quad f(x)<+\infty\}$. An element $x\in H$ is said to be a minimizer of $f$ whenever $f(x)\leqslant f(y)$ for all $y\in\dom f$.
\begin{mydef}
Let $f:H\to (-\infty,+\infty]$ be a closed convex function and $x\in\dom f$. The slope of $f$ at $x$ is defined as 
\begin{equation}
\label{eq:slope}
|\partial f|(x):=\limsup_{y\to x}\frac{\max\{f(x)-f(y),0\}}{d(x,y)}
\end{equation}
If $f(x)=+\infty$ we set $|\partial f|(x):=+\infty$.
\end{mydef}
It follows that $|\partial f|(x)= 0$ whenever $x\in H$ is a minimizer of $f$. The inclusion $\dom|\partial f|\subseteq \dom f$ is evident. Moreover the followings are true 
\begin{enumerate}[(i)]
 \item $|\partial (f+g)|(x)\leqslant |\partial f|(x)|+|\partial g|(x)$  for any two functions $f,g$
 \item $|\partial (\alpha f)|(x)=\alpha|\partial f|(x)$ for any scalar $\alpha>0$.
\end{enumerate}

\begin{lemma}\cite[Lemma 5.1.2]{Bacak}
\label{lem1}
Let $f:H\to(-\infty,+\infty]$ be a closed convex function. Then 
\begin{equation}
\label{eq:lemma}
|\partial f|(x)=\sup_{y\in H\setminus\{x\}}\frac{\max\{f(x)-f(y),0\}}{d(x,y)}, \hspace{0.2cm}x\in\dom f.
\end{equation}
Moreover $\dom|\partial f|$ is dense in $\dom f$ and $|\partial f|$ is closed whenever $f$ is closed.
\end{lemma}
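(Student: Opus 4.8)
The plan is to establish the three assertions in turn: the variational formula for $|\partial f|$, the density of $\dom|\partial f|$ in $\dom f$, and the lower semicontinuity of $|\partial f|$. The first is the engine that drives the other two.

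\emph{The variational formula.} The inequality ``$\leqslant$'' is immediate, since the $\limsup$ in \eqref{eq:slope} runs over a subset of the points $y\neq x$ appearing in the supremum in \eqref{eq:lemma}. For ``$\geqslant$'', fix $x\in\dom f$ and $y\neq x$; we may assume $y\in\dom f$, since otherwise the corresponding quotient vanishes. Let $\gamma\colon[0,1]\to H$ be the geodesic from $x$ to $y$ parametrized proportionally to arc length, so that $d(x,\gamma(t))=t\,d(x,y)$, and put $\varphi(t):=f(\gamma(t))$. Convexity of $f$ along geodesics makes $\varphi$ a finite convex function on $[0,1]$ with $\varphi(0)=f(x)$ and $\varphi(1)=f(y)$. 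Monotonicity of the difference quotients of a convex function shows that $t\mapsto\bigl(\varphi(0)-\varphi(t)\bigr)/t$ is nonincreasing on $(0,1]$, hence so is
\[
t\longmapsto\frac{\max\{\varphi(0)-\varphi(t),0\}}{t\,d(x,y)}=\frac{\max\{f(x)-f(\gamma(t)),0\}}{d(x,\gamma(t))}.
\]
In particular its value at $t=1$, namely $\max\{f(x)-f(y),0\}/d(x,y)$, is bounded above by its limit as $t\downarrow0$; since $\gamma(t)\to x$, that limit is at most $|\partial f|(x)$. Taking the supremum over $y\neq x$ gives \eqref{eq:lemma}.

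\emph{Density of $\dom|\partial f|$ in $\dom f$.} The idea is that resolvents supply nearby points of finite slope. Fix $x\in\dom f$ and $\lambda>0$ and set $z:=J_{\lambda}x$; this minimizer exists and, comparing objective values with $y=x$, satisfies $z\in\dom f$. For $y\in\dom f$ let $\gamma$ be the geodesic from $z$ to $y$. Inserting $\gamma(t)$ into the defining minimality of $z$ in \eqref{eq:R}, bounding $f(\gamma(t))\leqslant(1-t)f(z)+tf(y)$ by convexity and $d(x,\gamma(t))^2\leqslant(1-t)d(x,z)^2+t\,d(x,y)^2-t(1-t)d(z,y)^2$ by the $\CAT(0)$ four--point inequality, then dividing by $t$ and letting $t\downarrow0$, we obtain
\[
f(z)-f(y)\leqslant\frac{1}{2\lambda}\Bigl(d(x,y)^2-d(x,z)^2-d(z,y)^2\Bigr)\leqslant\frac{1}{\lambda}\,d(x,z)\,d(z,y),
\]
the last step by the triangle inequality. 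By the variational formula just proved this yields $|\partial f|(z)\leqslant\lambda^{-1}d(x,z)<+\infty$, so $z\in\dom|\partial f|$. Since $J_{\lambda}x\to x$ as $\lambda\downarrow0$ for every $x\in\overline{\dom f}$, the points $J_{\lambda}x$ approximate $x$ within $\dom f$, proving density.

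\emph{Lower semicontinuity.} Let $x_n\to x$; we show $|\partial f|(x)\leqslant\liminf_n|\partial f|(x_n)$. We may assume $L:=\liminf_n|\partial f|(x_n)<+\infty$ and, after passing to a subsequence, that $|\partial f|(x_n)\to L$; then each $x_n\in\dom|\partial f|\subseteq\dom f$. By the variational formula, $f(x_n)-f(y)\leqslant|\partial f|(x_n)\,d(x_n,y)$ for every $y\in H$. Fixing $y\in\dom f$ and letting $n\to\infty$, lower semicontinuity of $f$ gives $f(x)-f(y)\leqslant\liminf_n f(x_n)-f(y)\leqslant L\,d(x,y)$, and the same inequality is trivial for $y\notin\dom f$. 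In particular $f(x)<+\infty$, so $x\in\dom f$, and taking the supremum over $y\neq x$ we obtain $|\partial f|(x)=\sup_{y\neq x}\max\{f(x)-f(y),0\}/d(x,y)\leqslant L$. The only nonroutine point in the whole argument is the estimate $|\partial f|(J_{\lambda}x)\leqslant\lambda^{-1}d(x,J_{\lambda}x)$: it rests on the $\CAT(0)$ four--point inequality together with the basic existence and convergence properties of resolvents in Hadamard spaces; once these are available, the formula and the lower semicontinuity follow purely from convexity of $f$ along geodesics.
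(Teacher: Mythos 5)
Your proof is correct. Note, however, that the paper does not prove this lemma at all: it is quoted verbatim from Ba\v cak's book \cite[Lemma 5.1.2]{Bacak}, so there is no in-paper argument to compare against. Your reconstruction follows the standard route and all three steps check out: the identity of the slope with the global supremum via monotonicity of the convex difference quotients $t\mapsto(\varphi(0)-\varphi(t))/t$ along the geodesic from $x$ to $y$ (with the harmless reduction to $y\in\dom f$); density of $\dom|\partial f|$ via the resolvent estimate $|\partial f|(J_{\lambda}x)\leqslant\lambda^{-1}d(x,J_{\lambda}x)$, which you derive from the minimality of $J_{\lambda}x$, geodesic convexity of $f$, and the $\CAT(0)$ strong convexity of $d(x,\cdot)^2$ --- observe that this intermediate estimate is precisely Lemma \ref{lem2} of the paper (also only cited there), so your argument proves that lemma as a by-product; and lower semicontinuity of $|\partial f|$ from the subgradient-type inequality $f(x_n)\leqslant f(y)+|\partial f|(x_n)d(x_n,y)$ combined with lsc of $f$, including the degenerate case $x\notin\dom f$, which your argument handles by contraposition since a finite $\liminf$ forces $f(x)<+\infty$. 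Two cosmetic remarks: the inequality $d(x,\gamma(t))^2\leqslant(1-t)d(x,z)^2+t\,d(x,y)^2-t(1-t)d(z,y)^2$ is usually referred to as the CN (or comparison) inequality rather than the four-point inequality, and the convergence $J_{\lambda}x\to x$ for $x\in\cl\dom f$ that you invoke for density is exactly Lemma \ref{keylemma1} of the paper, so your appeal to it is consistent with the tools already on record.
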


 \begin{lemma}\cite[Lemma 5.1.3]{Bacak}\label{lem2}
 Let $f:H\to(-\infty,+\infty]$ be a closed convex function. Then for every $x\in H$ and $\lambda>0$ we have $J_{\lambda}x\in\dom|\partial f|$ and 
 \begin{equation}
 \label{eq:ubound}
 |\partial f|(J_{\lambda}x)\leqslant \frac{d(J_{\lambda}x,x)}{\lambda}.
 \end{equation}
 \end{lemma}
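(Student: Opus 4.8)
The plan is to write $p := J_\lambda x$ and to extract from the minimality of $p$ a variational inequality of the form
\[
2\lambda\bigl(f(p)-f(y)\bigr)\ \le\ d(y,x)^2 - d(p,x)^2 - d(p,y)^2 \qquad\text{for all } y\in H;
\]
the slope estimate then drops out by the triangle inequality together with the supremum formula of Lemma \ref{lem1}.

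For the variational inequality, fix $y\in\dom f$ with $y\neq p$ (if $y\notin\dom f$ the inequality is vacuous, and such $y$ contribute $0$ to the slope quotient). Let $\gamma\colon[0,1]\to H$ be the geodesic with $\gamma(0)=p$, $\gamma(1)=y$. Since $p$ minimizes $g:=f(\cdot)+\frac{1}{2\lambda}d(\cdot,x)^2$ we have $g(p)\le g(\gamma(t))$ for every $t\in(0,1]$. Inserting geodesic convexity of $f$, namely $f(\gamma(t))\le(1-t)f(p)+t f(y)$, and the standard $\CAT(0)$ inequality for the squared distance, $d(\gamma(t),x)^2\le(1-t)d(p,x)^2+t\,d(y,x)^2-t(1-t)d(p,y)^2$, then cancelling and dividing by $t$ gives $f(p)-f(y)\le\frac{1}{2\lambda}\bigl(d(y,x)^2-d(p,x)^2-(1-t)d(p,y)^2\bigr)$, and letting $t\to0^+$ yields the claimed inequality.

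Now $d(y,x)\le d(y,p)+d(p,x)$ gives $d(y,x)^2-d(p,x)^2-d(p,y)^2\le 2\,d(p,y)\,d(p,x)$, whence $f(p)-f(y)\le\frac{d(p,x)}{\lambda}\,d(p,y)$ and therefore $\frac{\max\{f(p)-f(y),0\}}{d(p,y)}\le\frac{d(p,x)}{\lambda}$ for every $y\neq p$. Taking the supremum over such $y$ and invoking Lemma \ref{lem1} gives $|\partial f|(p)\le\frac{d(J_\lambda x,x)}{\lambda}<+\infty$; since $f(p)<+\infty$ by minimality of $p$, we conclude $p\in\dom|\partial f|$, which finishes the argument.

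The one step here that is not pure bookkeeping is the quadratic convexity inequality for $t\mapsto d(\gamma(t),x)^2$, which is precisely where the $\CAT(0)$ geometry enters (through the comparison axiom). Should one prefer to avoid invoking it, plain convexity of $t\mapsto d(\gamma(t),x)^2$ already yields $f(p)-f(y)\le\frac{1}{2\lambda}\bigl(d(p,y)^2+2\,d(p,y)\,d(p,x)\bigr)$, and passing to the limit $y\to p$ in the $\limsup$ definition \eqref{eq:slope} of the slope makes the $d(p,y)^2$ term disappear, recovering the same bound.
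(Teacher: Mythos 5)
Your argument is correct. Note that the paper does not actually prove this lemma -- it is quoted verbatim from Ba\v cak's book -- so there is no in-paper proof to compare against; but your derivation is precisely the standard one: the proximal variational inequality $2\lambda\bigl(f(p)-f(y)\bigr)\le d(y,x)^2-d(p,x)^2-d(p,y)^2$ obtained from minimality of $p=J_\lambda x$ along the geodesic $[p,y]$ using the $\CAT(0)$ comparison inequality for $d(\cdot,x)^2$, followed by the triangle inequality and the supremum formula of Lemma \ref{lem1}. Two cosmetic points: you should record $f(p)<+\infty$ (which follows from properness of $f$ and minimality of $p$) \emph{before} invoking the supremum formula of Lemma \ref{lem1} at $p$, since that formula is stated for points of $\dom f$; and your closing alternative, which replaces the quadratic comparison inequality by plain convexity of $t\mapsto d(\gamma(t),x)^2$ and then uses the $\limsup$ definition \eqref{eq:slope} directly, is a valid and slightly more economical variant, though in a $\CAT(0)$ space the stronger inequality is available anyway.
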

A function $f:H\to(-\infty,+\infty]$ is said to be a strongly convex function with parameter $\mu>0$ if 
\begin{equation}
 \label{strcvxfnc}
 f(x_t)\leqslant (1-t)f(x_0)+tf(x_1)-\frac{\mu}{2}d(x_0,x_1)^2
\end{equation}
for all $t\in[0,1]$. Here $x_t:=(1-t)x_0\oplus t x_1$ denotes the convex combination of $x_0$ and $x_1$. Geometrically the element $x_t$ is the unique point on the geodesic segment $[x_0,x_1]$ connecting $x_0$ with $x_1$ such that $d(x_t,x_0)=td(x_0,x_1)$ and $d(x_t,x_1)=(1-t)d(x_0,x_1)$.
\begin{proposition}\cite[Proposition 2.2.17]{Bacak}
\label{prop1}
Let $(H, d)$ be a Hadamard space and let $f: H\to (-\infty,+\infty]$ be a closed
strongly convex function with parameter $\mu>0$. Then $f$ has a unique minimizer $x\in H$ and each minimizing sequence converges to $x$. Moreover 
\begin{equation}
 \label{eq:strcvxidentity}
 f(x)+\frac{\mu}{2} d(x,y)^2\leqslant f(y),\hspace{0.2cm}\forall y\in H.
\end{equation}
\end{proposition}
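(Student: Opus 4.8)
The plan is to establish, in order, that $m:=\inf_H f$ is finite, that minimizing sequences are Cauchy and hence converge to a minimizer, that this minimizer is unique and attracts every minimizing sequence, and finally to deduce \eqref{eq:strcvxidentity} from a one–parameter variation. The only ingredients needed are: strong convexity applied at midpoints of geodesics (a $\CAT(0)$ input), completeness of $H$, and the facts — already used in the proof of Proposition~\ref{prop2} — that a closed convex function on a Hadamard space is weakly lower semicontinuous (its sublevel sets being weakly closed) and that bounded sequences have weakly convergent subsequences.

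\emph{Step 1: $f$ is bounded below.} Fix $x_0\in\dom f$. Since $f$ is weakly lower semicontinuous, never equals $-\infty$, and every bounded sequence in $H$ has a weakly convergent subsequence, $f$ cannot tend to $-\infty$ along a sequence in the bounded ball $\overline{B}(x_0,1)$; hence $\beta:=\inf_{\overline{B}(x_0,1)}f>-\infty$. Now let $y\in\dom f$ with $\ell:=d(x_0,y)\ge1$ and let $u:=(1-\tfrac1\ell)x_0\oplus\tfrac1\ell y$ be the point of $[x_0,y]$ at distance $1$ from $x_0$. Strong convexity with $t=1/\ell$ gives
\[
\beta\le f(u)\le(1-\tfrac1\ell)f(x_0)+\tfrac1\ell f(y)-\tfrac{\mu}{2}(\ell-1),
\]
so $f(y)\ge \ell\beta-(\ell-1)f(x_0)+\tfrac{\mu}{2}\ell(\ell-1)$. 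The right–hand side is a continuous function of $\ell\ge1$ tending to $+\infty$, hence bounded below; together with $f\ge\beta$ on $\overline{B}(x_0,1)$ this yields $m>-\infty$ (and shows $f$ is coercive).

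\emph{Step 2: existence, uniqueness, and attraction of the minimizer.} Let $(y_n)$ be a minimizing sequence, $f(y_n)\to m$, and for $n,k$ let $z_{n,k}$ be the midpoint of $[y_n,y_k]$. Strong convexity at the midpoint reads $f(z_{n,k})\le\tfrac12 f(y_n)+\tfrac12 f(y_k)-\tfrac{\mu}{8}d(y_n,y_k)^2$; since $f(z_{n,k})\ge m$ this gives
\[
\tfrac{\mu}{8}\,d(y_n,y_k)^2\le\tfrac12\big(f(y_n)-m\big)+\tfrac12\big(f(y_k)-m\big),
\]
whose right–hand side tends to $0$ as $n,k\to\infty$. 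Thus $(y_n)$ is Cauchy; by completeness $y_n\to x$ for some $x\in H$, and lower semicontinuity gives $f(x)\le\liminf_n f(y_n)=m$, so $f(x)=m$ and $x$ is a minimizer. If $x'$ were another minimizer, the alternating sequence $x,x',x,x',\dots$ would be minimizing, hence Cauchy, forcing $x=x'$; so the minimizer is unique. Finally, every minimizing sequence is Cauchy by the same estimate, hence convergent, and its limit is a minimizer by lower semicontinuity, so equals $x$.

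\emph{Step 3: the estimate \eqref{eq:strcvxidentity}, and the main obstacle.} The inequality is trivial when $y\notin\dom f$, so let $y\in\dom f$ and for $t\in(0,1]$ set $x_t:=(1-t)x\oplus ty$ on $[x,y]$. By strong convexity and minimality of $x$,
\[
f(x)\le f(x_t)\le(1-t)f(x)+tf(y)-\tfrac{\mu}{2}t(1-t)d(x,y)^2;
\]
subtracting $(1-t)f(x)$, dividing by $t$, and letting $t\to0^+$ gives $f(x)+\tfrac{\mu}{2}d(x,y)^2\le f(y)$. I expect Step~1 to be the genuine obstacle: the midpoint computation in Step~2 and the variation in Step~3 are routine once $m$ is known to be finite, but ruling out $m=-\infty$ (equivalently, establishing coercivity) must draw on the Hadamard–space machinery — weak sequential compactness of bounded sets, weak lower semicontinuity of closed convex functions, and the quadratic growth forced by strong convexity — and it is exactly there that the hypotheses are used in an essential way.
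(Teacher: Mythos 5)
Your proof is correct and, in its core steps, follows the same route as the paper: the midpoint/strong-convexity estimate showing minimizing sequences are Cauchy, completeness plus lower semicontinuity to produce the minimizer, and the one-parameter geodesic variation with $t\downarrow 0$ to obtain \eqref{eq:strcvxidentity}. The one genuine difference is your Step 1: where the paper simply cites \cite[Lemma 2.2.14]{Bacak} for boundedness below, you prove it from scratch, first bounding $f$ on a closed ball via weak sequential compactness and weak lower semicontinuity of a closed convex function (the same machinery invoked in the proof of Proposition~\ref{prop2}), and then propagating that bound by the quadratic growth that strong convexity forces along geodesics, which in fact yields coercivity as a bonus. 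This makes your argument self-contained at the cost of importing the weak-topology toolkit; the paper's version is shorter but leans on an external lemma. Your treatment of uniqueness via the alternating sequence is also slightly more explicit than the paper's one-line appeal to strong convexity, and both are fine.
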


\begin{proof}
 Let $(x_n)_{n\in\mathbb{N}}$ be a minimizing sequence of $f$ i.e. $\lim_nf(x_n)=\inf_{y\in H}f(y)$. By virtue of \cite[Lemma 2.2.14]{Bacak} $f$ is bounded from below. Denote by $x_{mn}:=\frac{1}{2}x_m\oplus\frac{1}{2}x_n$. By strong convexity 
 $$f(x_{mn})\leqslant \frac{1}{2}f(x_m)+\frac{1}{2}f(x_n)-\frac{\mu}{8}d(x_m,x_n)^2$$
 implying
 $$\frac{\mu}{8}d(x_m,x_n)^2\leqslant \frac{1}{2}f(x_m)+\frac{1}{2}f(x_n)-f(x_{mn}).$$
 But the new sequence $(x_{mn})_{m,n\in\mathbb{N}}$ is also a minimizing sequence. Then $\lim_{m,n}d(x_m,x_n)=0$ implies $(x_n)_{n\in\mathbb{N}}$ is Cauchy sequence so it converges to some point $x\in H$. Assumption $f$ is closed is equivalent to $f$ being lower-semicontinuous. The inequalities $f(x)\leqslant \liminf_{n}f(x_n)=\inf_{y\in H}f(y)$ and $f(x)\geqslant \inf_{y\in H}f(y)$ imply that $x\in\arg\min_{y\in H}f(y)$. Uniqueness of minimizer follows immediately from the strong convexity property. Now consider some $y\in H$ and let $\gamma:[0,1]\to H$ be the geodesic emanating from $x$ and ending at $y$ i.e. $\gamma(0)=x,\gamma(1)=y$. Then $f(x)<f(\gamma(t))$ together with the strong convexity imply 
 $$f(x)<(1-t)f(x)+tf(y)-\frac{\mu}{2}(1-t)td(x,y)^2$$
 or equivalently
 $$tf(x)<tf(y)-\frac{\mu}{2}(1-t)td(x,y)^2.$$
 Dividing by $t$ and taking limit $t\downarrow 0$ yields inequality \eqref{eq:strcvxidentity}.
\end{proof}

\subsection{Asymptotically bounded slope}
\begin{mydef}
A sequence of functions $f^n:H\to(-\infty,+\infty]$ is said to have pointwise asymptotically bounded slope on $H$ whenever $\limsup_n|\partial f^n|(x)$ is finite for all $x\in H$. If additionally for all $x\in H$ we have $\limsup_n|\partial f^n|(x)\leqslant C$ for some $C>0$ then the sequence of functions $f^n$ is said to have uniform asymptotically bounded slope on $H$.
\end{mydef}
Recall that a set $K$ of a vector space $V$ is a cone (or sometimes called a linear cone) if for each $x$ in $K$ and positive scalars $\alpha$, the product $\alpha x$ is in $K$. The set $K$ is a convex cone if and only if any nonnegative combination of elements from $K$ remains in $K$. Let $F(H)$ denote the vector space of sequences of (extended) real valued functions defined on $H$ and let $A(H):=\{(f_n)_{n\in\mathbb{N}}\in F(H)\hspace{0.1cm}|\hspace{0.1cm} \limsup_n|\partial f^n|(x)<+\infty,\forall x\in H\}$ denote the set of all sequences that have pointwise asymptotically bounded slope on $H$.
\begin{proposition}
$A(H)$ is a convex cone in $F(H)$.
\end{proposition}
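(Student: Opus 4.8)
The plan is to reduce everything to the two elementary calculus rules for the slope listed right after its definition, namely the subadditivity $|\partial(f+g)|(x)\leqslant|\partial f|(x)+|\partial g|(x)$ and the positive homogeneity $|\partial(\alpha f)|(x)=\alpha|\partial f|(x)$ for $\alpha>0$, combined with the corresponding properties of $\limsup$ over $n$. Recall that a subset of a vector space is a convex cone exactly when it is closed under addition and under multiplication by positive scalars (equivalently, closed under arbitrary nonnegative finite combinations); thus it suffices to establish these two closure properties for $A(H)$.

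For stability under positive scaling, I would fix $(f^n)_{n\in\mathbb{N}}\in A(H)$, $\alpha>0$ and $x\in H$: by positive homogeneity of the slope, $|\partial(\alpha f^n)|(x)=\alpha|\partial f^n|(x)$ for every $n$, and since multiplication by $\alpha>0$ commutes with $\limsup$ on $[0,+\infty]$, we get $\limsup_n|\partial(\alpha f^n)|(x)=\alpha\limsup_n|\partial f^n|(x)<+\infty$, so $(\alpha f^n)_{n\in\mathbb{N}}\in A(H)$. For stability under addition, I would take $(f^n)_{n\in\mathbb{N}},(g^n)_{n\in\mathbb{N}}\in A(H)$; the pointwise sums $f^n+g^n$ are again well-defined maps into $(-\infty,+\infty]$ (the functions never take the value $-\infty$), so $(f^n+g^n)_{n\in\mathbb{N}}\in F(H)$, and for each $x\in H$ subadditivity of the slope together with $\limsup_n(a_n+b_n)\leqslant\limsup_n a_n+\limsup_n b_n$ (valid since all slopes are nonnegative, so no $\infty-\infty$ occurs) gives $\limsup_n|\partial(f^n+g^n)|(x)\leqslant\limsup_n|\partial f^n|(x)+\limsup_n|\partial g^n|(x)<+\infty$. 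Combining the two properties shows $A(H)$ is a convex cone; it is moreover nonempty, since the constant sequence equal to the zero function lies in it because $|\partial 0|\equiv 0$.

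The argument is entirely routine, so I do not expect a genuine obstacle; the one point worth flagging is that the two slope rules above are being used for arbitrary sequences in $F(H)$, not only for closed convex functions, which is precisely the generality in which they are stated in the excerpt. Should one prefer to view $A(H)$ as a cone inside the sequences of closed convex functions, one would only add the trivial remark that closedness and convexity are each preserved under addition and under positive scaling.
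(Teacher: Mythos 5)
Your proof is correct and follows essentially the same route as the paper: the paper's own argument likewise reduces to the subadditivity and positive homogeneity of the slope (re-deriving the combined inequality $|\partial(\alpha f^n+\beta g^n)|(x)\leqslant\alpha|\partial f^n|(x)+\beta|\partial g^n|(x)$ inline from the definition) together with subadditivity of $\limsup$ over $n$. The only cosmetic difference is that you treat scaling and addition separately while the paper handles the combination $\alpha f^n+\beta g^n$ in a single step.
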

\begin{proof}
 It suffices to prove the statement for only two elements. Let $(f^n),(g^n)\in A(H)$ and $\alpha,\beta>0$. Denote by $h^n:=\alpha f^n+\beta g^n$ for each $n\in\mathbb{N}$. By definition of the slope \eqref{eq:slope} we have 
 $$|\partial h^n|(x)=\limsup_{y\to x}\frac{\max\{h^n(x)-h^n(y),0\}}{d(x,y)}.$$
 On the other hand
 $$\max\{h^n(x)-h^n(y),0\}\leqslant \alpha\max\{f^n(x)-f^n(y),0\}+\beta\max\{g^n(x)-g^n(y),0\}$$
 and the fact that the limit superior of the sum is not greater than the sum of limit superior together with $\alpha,\beta>0$ imply
 $$|\partial h^n|(x)\leqslant \alpha\limsup_{y\to x}\frac{\max\{f^n(x)-f^n(y),0\}}{d(x,y)}+\beta\limsup_{y\to x}\frac{\max\{g^n(x)-g^n(y),0\}}{d(x,y)}$$
 or equivalently 
 $$|\partial h^n|(x)\leqslant\alpha |\partial f^n|(x)+\beta |\partial g^n|(x),\hspace{0.2cm}\forall n\in\mathbb{N}.$$
 Taking limit superior with respect to $n$ on both sides yields 
 $$\limsup_n|\partial h^n|(x)\leqslant\limsup_n(\alpha |\partial f^n|(x)+\beta |\partial g^n|(x))\leqslant \alpha \limsup_n|\partial f^n|(x)+\beta \limsup_n|\partial g^n|(x).$$
 Assumption $(f^n),(g^n)\in A(H)$ implies $\limsup_n|\partial f^n|(x),\limsup_n|\partial g^n|(x)<+\infty,\forall x\in H$. Hence $\limsup_n|\partial h^n|(x)|<+\infty$ for each $x\in H$ gives $(h^n)\in A(H)$ as desired.
\end{proof}
\begin{remark}
 The set $A_0(H)$ of sequences of functions with uniform asymptotically bounded slope is also a convex cone.
\end{remark}

\begin{proposition} Let $(f^n)$ be a sequence of proper closed convex functions defined on a Hadamard space $(H,d)$. Let $f$ be the pointwise limit of $(f^n)$ such that $\dom|\partial f|\neq\emptyset$. For a given element $x\in H$ define the sequence of functions $(g^n)$ and $g$ for all $y\in H\setminus\{x\}$
\begin{align*}
\label{eq:ratio}
& g^n(y;x):=\frac{\max\{f^n(x)-f^n(y),0\}}{d(x,y)},\hspace{0.2cm}n\in\mathbb{N}\\\nonumber
& g(y;x):=\frac{\max\{f(x)-f(y),0\}}{d(x,y)}.
\end{align*} Then $(f^n)\in A(\dom|\partial f|)$ whenever 
\begin{equation}
\label{eq:sufficientcondition}
\lim_n\sup_{y\in H\setminus\{x\}}|g^n(y;x)-g(y;x)|=0.
\end{equation}
If additionally $\sup_{x\in \dom|\partial f|}|\partial f|(x)<+\infty$ then $(f^n)\in A_0(\dom|\partial f|)$.
\end{proposition}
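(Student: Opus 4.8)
The plan is to read the slopes straight off their definition: for any $n$ and any $z$ with $f^n(z)<+\infty$ one has $|\partial f^n|(z)=\limsup_{y\to z}g^n(y;z)$, and likewise $|\partial f|(z)=\limsup_{y\to z}g(y;z)$ whenever $f(z)<+\infty$, so the whole matter reduces to pushing the uniform estimate \eqref{eq:sufficientcondition} through a $\limsup_{y\to x}$. Throughout I take $f\colon H\to(-\infty,+\infty]$ (being the pointwise limit of the convex functions $f^n$, the limit $f$ is convex and $(-\infty,+\infty]$-valued, so each $g(y;x)$ with $y\neq x$ is a well-defined finite number once $x\in\dom f$), and I read \eqref{eq:sufficientcondition} as holding for every $x\in\dom|\partial f|$.

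First I would fix $x\in\dom|\partial f|$ and set $\varepsilon_n:=\sup_{y\in H\setminus\{x\}}|g^n(y;x)-g(y;x)|$, so that $\varepsilon_n\to 0$. Before touching any $\limsup$'s I would dispose of infinite values: if $f^n(x)=+\infty$ then, $f^n$ being proper, there is $y_0\in\dom f^n$ (necessarily $y_0\neq x$) with $g^n(y_0;x)=+\infty$, while $g(y_0;x)<+\infty$ because $x\in\dom|\partial f|\subseteq\dom f$; hence $\varepsilon_n=+\infty$ for such $n$, and since $\varepsilon_n\to 0$ this can occur for only finitely many $n$. Thus for all large $n$ we have $f^n(x)<+\infty$, and the inequalities $g^n(y;x)\leqslant g(y;x)+\varepsilon_n$ and $g(y;x)\leqslant g^n(y;x)+\varepsilon_n$ hold for every $y\neq x$ with all terms finite.

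Next I would take $\limsup_{y\to x}$ in these two inequalities. Since $\varepsilon_n$ does not depend on $y$, this yields $|\partial f^n|(x)\leqslant|\partial f|(x)+\varepsilon_n$ and $|\partial f|(x)\leqslant|\partial f^n|(x)+\varepsilon_n$ for all large $n$; in particular $|\partial f^n|(x)\leqslant|\partial f|(x)+\varepsilon_n<+\infty$, so $\bigl|\,|\partial f^n|(x)-|\partial f|(x)\,\bigr|\leqslant\varepsilon_n\to 0$ and in fact $\lim_n|\partial f^n|(x)=|\partial f|(x)$. Since $x\in\dom|\partial f|$ forces $|\partial f|(x)<+\infty$, we get $\limsup_n|\partial f^n|(x)<+\infty$; as $x$ was an arbitrary point of $\dom|\partial f|$, this is exactly $(f^n)\in A(\dom|\partial f|)$. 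For the last assertion, put $C:=\sup_{x\in\dom|\partial f|}|\partial f|(x)$ and assume $C<+\infty$; the inequality $\limsup_n|\partial f^n|(x)\leqslant|\partial f|(x)\leqslant C$ established above is valid for every $x\in\dom|\partial f|$, which is precisely the statement $(f^n)\in A_0(\dom|\partial f|)$.

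The only delicate point — and the one I would write out most carefully — is the treatment of the value $+\infty$: one must use \eqref{eq:sufficientcondition} to rule out $f^n(x)=+\infty$ for large $n$ (so that the difference quotients are genuine real numbers and the $\limsup$ manipulations are legitimate) before comparing the slopes. One may alternatively invoke Lemma \ref{lem1} to replace $\limsup_{y\to x}$ by $\sup_{y\neq x}$, after which $\bigl|\,|\partial f^n|(x)-|\partial f|(x)\,\bigr|\leqslant\varepsilon_n$ follows at once from $|\sup_y a_y-\sup_y b_y|\leqslant\sup_y|a_y-b_y|$; this is slightly cleaner but requires $f$ to be closed, which for a mere pointwise limit need not hold, whereas the argument above uses only the definition \eqref{eq:slope} of the slope.
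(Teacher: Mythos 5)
Your proof is correct, and it arrives at the same central fact as the paper---namely $\lim_n|\partial f^n|(x)=|\partial f|(x)$ for each $x\in\dom|\partial f|$, from which both conclusions are immediate---but by a somewhat different and in fact more careful route. The paper invokes Lemma \ref{lem1} to replace the $\limsup_{y\to x}$ in \eqref{eq:slope} by $\sup_{y\in H\setminus\{x\}}$ for both $f^n$ and $f$, and then applies the reverse triangle inequality $|\sup_y a_y-\sup_y b_y|\leqslant\sup_y|a_y-b_y|$; this is exactly the alternative you sketch at the end. You instead work with the $\limsup_{y\to x}$ definition directly, adding $\varepsilon_n$ to the pointwise bounds and passing to the limit superior in $y$. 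Your version buys two things. First, you explicitly rule out $f^n(x)=+\infty$ for large $n$ (properness of $f^n$ would force $\varepsilon_n=+\infty$), so that the difference quotients are genuine real numbers before any limit operation is performed; the paper is silent on this. Second, as you correctly observe, Lemma \ref{lem1} is stated for closed convex functions, and a pointwise limit of closed convex functions need not be lower semicontinuous, so the paper's application of that lemma to $f$ is not literally licensed, whereas your argument uses only the defining formula \eqref{eq:slope} for $f$ and sidesteps the issue. The remaining caveats are shared by both proofs and you flag them appropriately: condition \eqref{eq:sufficientcondition} must be read as holding at every $x\in\dom|\partial f|$ (a single $x$ cannot yield membership in $A(\dom|\partial f|)$), and the pointwise limit $f$ must be assumed to take values in $(-\infty,+\infty]$.
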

\begin{proof}
From the elementary reverse triangle inequality 
$$\sup_{y\in H\setminus\{x\}}|g^n(y;x)-g(y;x)|\geqslant |\sup_{y\in H\setminus\{x\}}g^n(y;x)-\sup_{y\in H\setminus\{x\}}g(y;x)|.$$
Assumption \eqref{eq:sufficientcondition} implies $\lim_n\sup_{y\in H\setminus\{x\}}g^n(y;x)=\sup_{y\in H\setminus\{x\}}g(y;x)$. By virtue of Lemma \ref{lem1} this is equivalent to $\lim_n|\partial f^n|(x)=|\partial f|(x)$. Since $\dom |\partial f|\neq\emptyset$ then $\lim_n|\partial f^n|(x)$ is finite on $\dom|\partial f|$. Therefore $(f^n)$ has pointwise asymptotically bounded slope on $\dom|\partial f|$. If additionally $\sup_{x\in \dom|\partial f|}|\partial f|(x)<+\infty$ then $|\partial f|(x)\leqslant C$ for some $C>0$ for all $x\in\dom|\partial f|$. This implies $\lim_n|\partial f^n|(x)\leqslant C$ for all $x\in \dom|\partial f|$. 
\end{proof}

\section{A Converse Theorem}

\begin{thm}
\label{thm1}
Let $(H,d)$ be a Hadamard space and $f^n:H\to(-\infty,+\infty]$ be a sequence of closed convex functions. Suppose 
\begin{enumerate}[(i)]
\item $\lim_nf^n(x)=f(x)$ for all $x\in H$\label{a1}
\item $(f^n)\in A(H)$ \label{a2}
\end{enumerate}
If $\lim_n J^n_{\lambda}x=J_{\lambda}x$ then $\lim_nf^n_{\lambda}(x)=f_{\lambda}(x)$ for each $x\in H$.
\end{thm}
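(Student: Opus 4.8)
The plan is to realize the Moreau approximates as objective values at the proximal points and then control the resulting moving evaluation of $f^n$ using the slope bounds. First I would record that, since $y\mapsto f^n(y)+\frac{1}{2\lambda}d(y,x)^2$ is closed strongly convex, its infimum in \eqref{eq:MY} is attained at the proximal point of \eqref{eq:R} (Proposition \ref{prop1}), so
$$f^n_\lambda(x)=f^n(J^n_\lambda x)+\frac{1}{2\lambda}d(J^n_\lambda x,x)^2,\qquad f_\lambda(x)=f(J_\lambda x)+\frac{1}{2\lambda}d(J_\lambda x,x)^2 .$$
Write $p_n:=J^n_\lambda x$ and $p:=J_\lambda x$. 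By hypothesis $p_n\to p$, hence $\frac{1}{2\lambda}d(p_n,x)^2\to\frac{1}{2\lambda}d(p,x)^2$ by continuity of the metric, and $f(p)<+\infty$ because $p$ realizes the finite minimum in \eqref{eq:R}. Thus everything reduces to proving $f^n(p_n)\to f(p)$, which I would obtain from two one-sided estimates.

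The upper estimate is immediate and needs only \eqref{a1}: as $f^n_\lambda(x)$ is an infimum, the competitor $y=p$ gives $f^n_\lambda(x)\leqslant f^n(p)+\frac{1}{2\lambda}d(p,x)^2$, and letting $n\to\infty$ with $f^n(p)\to f(p)$ yields $\limsup_nf^n_\lambda(x)\leqslant f(p)+\frac{1}{2\lambda}d(p,x)^2=f_\lambda(x)$. Neither \eqref{a2} nor Lemma \ref{lem2} is used here.

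The lower estimate is where hypothesis \eqref{a2} does the work. From the characterization in Lemma \ref{lem1}, once $n$ is large enough that $p\in\dom f^n$ one has $f^n(p)-f^n(y)\leqslant|\partial f^n|(p)\,d(p,y)$ for every $y\in H$; the choice $y=p_n$ gives
$$f^n(p_n)\geqslant f^n(p)-|\partial f^n|(p)\,d(p,p_n).$$
Since $(f^n)\in A(H)$ we have $\limsup_n|\partial f^n|(p)<+\infty$ (which in particular forces $p\in\dom f^n$ for $n$ large), so $|\partial f^n|(p)\,d(p,p_n)\to 0$ as $d(p,p_n)\to 0$; combined with $f^n(p)\to f(p)$ this gives $\liminf_nf^n(p_n)\geqslant f(p)$, whence $\liminf_nf^n_\lambda(x)\geqslant f(p)+\frac{1}{2\lambda}d(p,x)^2=f_\lambda(x)$. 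Together with the upper estimate this proves $\lim_nf^n_\lambda(x)=f_\lambda(x)$.

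I expect the lower estimate to be the only real obstacle: pointwise convergence of $f^n$ gives no control at all over the values $f^n(p_n)$ at the \emph{moving} points $p_n$, and the point of \eqref{a2} is precisely to supply the missing equi-Lipschitz-type bound for $f^n$ near $p=J_\lambda x$ through Lemma \ref{lem1}. (An alternative derivation of the same estimate uses Lemma \ref{lem2}: write $|f^n(p_n)-f^n(p)|\leqslant\max\{|\partial f^n|(p_n),|\partial f^n|(p)\}\,d(p_n,p)$, bound the first slope by $d(p_n,x)/\lambda$ via Lemma \ref{lem2} and the second by \eqref{a2}, and use that $d(p_n,x)$ is bounded because $p_n\to p$.) It is also worth checking the harmless finiteness points on the way: $f(p)<+\infty$, so \eqref{a1} really gives $f^n(p)\to f(p)$ in $\mathbb{R}$, and $p_n\in\dom|\partial f^n|\subseteq\dom f^n$, so $f^n(p_n)$ is finite.
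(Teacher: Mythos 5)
Your proposal is correct and follows essentially the same route as the paper's proof: the same decomposition of $f^n_\lambda(x)$ at the proximal points, the same upper estimate from using $J_\lambda x$ as a competitor together with hypothesis \eqref{a1}, and the same lower estimate from Lemma \ref{lem1} combined with the asymptotic slope bound \eqref{a2} at $J_\lambda x$. The only cosmetic difference is that you phrase the conclusion directly in terms of $f^n_\lambda(x)$ rather than first isolating $\lim_n f^n(J^n_\lambda x)=f(J_\lambda x)$, which changes nothing of substance.
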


\begin{proof} Note that $f^n$ is convex for each $n$. Since the metric $d(\cdot,x)^2$ is a strongly convex function then the map
$$y\mapsto f^n(y)+\frac{1}{2\lambda}d(y,x)^2$$
is strongly convex for each $x\in H$. It follows from Proposition \ref{prop1} that the proximal mapping 
$$J^n_{\lambda}x:=\arg\min _{y\in H}\Big\{f^n(y)+\frac{1}{2\lambda}d(y,x)^2\Big\}$$ exists and it is unique. Similarly for $J_{\lambda}x$. By definition for all $n$ we have
$$f^n_{\lambda}(x)=f^n(J^n_{\lambda}x)+\frac{1}{2\lambda}d(x,J^n_{\lambda}x)^2.$$
From the elementary triangle inequality
$d(x,J^n_{\lambda}x)\leqslant d(x,J_{\lambda}x)+d(J_{\lambda}x,J^n_{\lambda}x)$
and interchanging the role of $J^n_{\lambda}x$ with $J_{\lambda}x$ we obtain the estimate
$$|d(x,J^n_{\lambda}x)-d(x,J_{\lambda}x)|\leqslant d(J_{\lambda}x,J^n_{\lambda}x).$$
Assumption $\lim_nJ^n_{\lambda}x=J_{\lambda}x$ implies $\lim_nd(x,J^n_{\lambda}x)=d(x,J_{\lambda}x)$ for each $x\in H$. Therefore it is sufficient to prove $\lim_nf^n(J^n_{\lambda}x)=f(J_{\lambda}x)$. By Lemma \ref{lem2}, $J_{\lambda}x\in \dom |\partial f|$ for any $x\in H$  yields $J_{\lambda}x\in \dom f$ since $\dom |\partial f|\subseteq\dom f$. Similarly $J^n_{\lambda}x\in \dom f^n$.  
From the definition of Moreau approximate it follows that for all $n$
$$f^n(J^n_{\lambda}x)+\frac{1}{2\lambda}d(J^n_{\lambda}x,x)^2\leqslant f^n(J_{\lambda}x)+\frac{1}{2\lambda}d(J_{\lambda}x,x)^2.$$
which in turn together with assumption \eqref{a1} and $\lim_n J^n_{\lambda}x=J_{\lambda}x$ gives 
\begin{equation}
\label{eq:limsup}
-\infty\leqslant\limsup_nf^n(J^n_{\lambda}x)\leqslant  f(J_{\lambda}x)<+\infty.
\end{equation}
On the other hand assumption \eqref{a2} implies that for some finite valued nonnegative function $C:H\to\mathbb{R}_+$ we have $\limsup_n|\partial f^n|(x)\leqslant C(x)$ for all $x\in H$. In particular $\limsup_n|\partial f^n|(J_{\lambda}x)\leqslant C(J_{\lambda}x)<+\infty$ for all $x\in H$. Therefore there exists some $n_0\in\mathbb{N}$ such that for all $n\geqslant n_0$ we have $J_{\lambda}x\in\dom |\partial f^n|$ implying that $f^n(J_{\lambda}x)$ and $|\partial f^n|(J_{\lambda}x)$ are finite. 
By virtue of Lemma \ref{lem1} the following inequality holds for all $n\geqslant n_0$ 
$$f^n(J^n_{\lambda}x)\geqslant f^n(J_{\lambda}x)-|\partial f^n|(J_{\lambda}x) d(J_{\lambda}x,J^n_{\lambda}x).$$
This implies 
\begin{align}
\label{eq:lesh}
+\infty>\liminf_nf^n(J^n_{\lambda}x)&\geqslant f(J_{\lambda}x)-\limsup_n|\partial f^n|(J_{\lambda}x) d(J_{\lambda}x,J^n_{\lambda}x)\geqslant-\infty.
\end{align}
But $\limsup_n|\partial f^n|(J_{\lambda}x)\leqslant C(J_{\lambda}x)<+\infty$ yields 
$$\limsup_n|\partial f^n|(J_{\lambda}x) d(J_{\lambda}x,J^n_{\lambda}x)=\limsup_n|\partial f^n|(J_{\lambda}x)\cdot \lim_n d(J_{\lambda}x,J^n_{\lambda}x)\leqslant C(J_{\lambda}x)\cdot 0=0$$
which together with \eqref{eq:lesh} gives 
\begin{equation}
\label{eq:liminf}
+\infty>\liminf_nf^n(J^n_{\lambda}x)\geqslant f(J_{\lambda}x)>-\infty.
\end{equation}
From inequality \eqref{eq:liminf} and \eqref{eq:limsup} we obtain $f(J_{\lambda}x)=\lim_nf^n(J^n_{\lambda}x)$ as required.
\end{proof}
It is natural to ask if, under some additional condition, the pointwise convergence of $f^n$ to $f$ is also a necessary condition. The following theorem establishes this.

\begin{thm}
\label{thm2}
Let $(H,d)$ be a Hadamard space and $f,f^n:H\to(-\infty,+\infty]$ be a sequence of closed convex functions on $H$. Suppose $(f^n)\in A(H)$. If for all $x\in H$, $\lim_nf^n_{\lambda}(x)=f_{\lambda}(x)$  then 
\begin{enumerate}[(i)]
\item $\lim_n J^n_{\lambda}x=J_{\lambda}x$\label{r1}
\item $\lim_nf^n(x)=f(x)$.\label{r2}
\end{enumerate}
\end{thm}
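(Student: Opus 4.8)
The plan is to reduce everything to Mosco convergence and then invoke the two theorems of Ba\v cak recorded above. The first observation is that the standing hypothesis $\lim_n f^n_{\lambda}(x)=f_{\lambda}(x)$ for every $x\in H$ is exactly the assumption of Theorem \ref{Bacak2}, which therefore yields $M-\lim_n f^n=f$. Once Mosco convergence is in hand, Theorem \ref{Bacak} applies verbatim and gives $\lim_n J^n_{\lambda}x=J_{\lambda}x$ for all $x\in H$ and all $\lambda>0$; so assertion \eqref{r1} follows immediately, and in fact does not use the hypothesis $(f^n)\in A(H)$ at all.

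The actual work is in deducing the pointwise convergence \eqref{r2} from Mosco convergence together with the asymptotically bounded slope condition. For the lower estimate, note that strong convergence implies weak convergence, so the constant sequence $x_n\equiv x$ satisfies $x_n\overset{w}\to x$; condition \eqref{i} in the definition of Mosco convergence then gives $f(x)\leqslant\liminf_n f^n(x)$. If $f(x)=+\infty$ this already forces $\lim_n f^n(x)=+\infty=f(x)$, so from now on we may assume $x\in\dom f$.

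For the matching upper estimate I would use condition \eqref{ii}: there is a sequence $y_n\to x$ with $\limsup_n f^n(y_n)\leqslant f(x)<+\infty$. The assumption $(f^n)\in A(H)$ furnishes a finite constant $K$ (depending on $x$) and an index $n_0$ such that $|\partial f^n|(x)\leqslant K$, and in particular $x\in\dom|\partial f^n|\subseteq\dom f^n$, for all $n\geqslant n_0$; thus $f^n(x)$ is finite for $n\geqslant n_0$. Applying the global form of the slope inequality from Lemma \ref{lem1} to $f^n$ at the point $x$ with the competitor $y_n$ gives
$$f^n(x)\leqslant f^n(y_n)+|\partial f^n|(x)\,d(x,y_n)\leqslant f^n(y_n)+K\,d(x,y_n),\qquad n\geqslant n_0,$$
and letting $n\to\infty$, using $d(x,y_n)\to 0$, yields $\limsup_n f^n(x)\leqslant\limsup_n f^n(y_n)\leqslant f(x)$. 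Combined with the lower estimate, this establishes \eqref{r2}.

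The only genuinely delicate point is the bookkeeping around effective domains: one must ensure that $x$ eventually lies in $\dom|\partial f^n|$ so that the slope inequality is used between finite quantities, which is precisely what membership in $A(H)$ buys, and the degenerate case $f(x)=+\infty$ must be dispatched separately as above. Beyond that, the proof consists only of the two invocations of Ba\v cak's theorems and the elementary estimate displayed above; no property of weak convergence is needed other than that it is implied by strong convergence.
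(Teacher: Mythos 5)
Your proposal is correct, and for part \eqref{r2} it takes a genuinely different and more elementary route than the paper. Part \eqref{r1} is identical in both: Theorem \ref{Bacak2} upgrades the hypothesis to $M-\lim_n f^n=f$, and Theorem \ref{Bacak} then gives convergence of the proximal mappings; your observation that $(f^n)\in A(H)$ plays no role here is accurate. For \eqref{r2}, the paper works at one remove from $x$: it first proves $f(J_{\lambda}x)=\lim_nf^n(J^n_{\lambda}x)$, obtains the lower bound $f(x)\leqslant\liminf_nf^n(x)$ by letting $\lambda\downarrow 0$ through $f(J_{\lambda}x)\leqslant\liminf_nf^n(x)$ and lower semicontinuity, and obtains the upper bound via a diagonal sequence $\lambda(n)\downarrow 0$ (invoking \cite[Lemma 1.18]{Attouch}) together with the slope inequality transferring values from $J^n_{\lambda(n)}x$ to $x$. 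You instead exploit the Mosco conditions directly at the point $x$: the constant sequence is weakly convergent, so condition \eqref{i} gives the lower bound outright, and the recovery sequence $y_n\to x$ from condition \eqref{ii} plays the role that $J^n_{\lambda(n)}x$ plays in the paper, with Lemma \ref{lem1} and the uniform bound $|\partial f^n|(x)\leqslant K$ (for $n\geqslant n_0$, which also guarantees $x\in\dom f^n$) transferring $\limsup_nf^n(y_n)\leqslant f(x)$ to $\limsup_nf^n(x)\leqslant f(x)$. This buys you a shorter argument that avoids both the diagonalization lemma and the need to control $d(J^n_{\lambda(n)}x,x)$, and it sidesteps the implicit requirement $x\in\cl\dom f$ that the paper's use of $\lim_{\lambda\downarrow 0}J_{\lambda}x=x$ entails; your handling of the degenerate case $f(x)=+\infty$ and of the domain bookkeeping is sound.
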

\begin{proof}
By Theorem \ref{Bacak2} assumption $\lim_nf^n_{\lambda}(x)=f_{\lambda}(x)$ implies $M-\lim_nf^n(x)=f(x)$ for all $x\in H$. Then Theorem \ref{Bacak} yields $\lim_nJ^n_{\lambda}x=J_{\lambda}x$ for all $x\in H$. This proves \eqref{r1}
which in turn yields
$$f_{\lambda}(x)=\lim_nf^n_{\lambda}(x)=\limsup_nf^n(J^n_{\lambda}x)+\frac{1}{2\lambda}\lim_nd(J^n_{\lambda}x,x)^2=\limsup_nf^n(J^n_{\lambda}x)+\frac{1}{2\lambda}d(J_{\lambda}x,x)^2.$$
By definition of Moreau approximate then it follows $f(J_{\lambda}x)=\limsup_nf^n(J^n_{\lambda}x)$. Similarly 
$f(J_{\lambda}x)=\liminf_nf^n(J^n_{\lambda}x)$
hence $f(J_{\lambda}x)=\lim_nf^n(J^n_{\lambda}x)$. On the other hand for each $n\in\mathbb{N}$ we have
$$f^n(J^n_{\lambda}x)\leqslant f^n(J^n_{\lambda}x)+\frac{1}{2\lambda}d(J^n_{\lambda}x,x)^2\leqslant f^n(x)\Rightarrow\lim_n f^n(J^n_{\lambda}x)\leqslant\liminf_n f^n(x).$$
Therefore $f(J_{\lambda}x)\leqslant\liminf_n f^n(x)$ for all $x\in H$ and for all $\lambda>0$. Using $\lim_{\lambda\downarrow 0}J_{\lambda}x=x$ and the assumption that $f$ is closed we obtain 
\begin{equation}
\label{eq:lower}
f(x)\leqslant \liminf_{\lambda\downarrow 0}f(J_{\lambda}x)\leqslant\liminf_n f^n(x).
\end{equation}
By \cite[Lemma 1.18]{Attouch} there exists a mapping $n\mapsto \lambda(n)$ such that $\lim_n\lambda(n)=0$ and $$\lim_{\lambda\downarrow 0}\lim_nf^n_{\lambda}(x)=\lim_nf^n_{\lambda(n)}(x).$$
By definition of Moreau approximate we can write
 $$f^n_{\lambda(n)}(x)=f^n(J^n_{\lambda(n)}x)+\frac{1}{2\lambda(n)}d(J^n_{\lambda(n)}x,x)^2$$
 implying 
\begin{equation}
 \label{eq:lineq}
 f(x)\geqslant \lim_n\Big[f^n(J^n_{\lambda(n)}x)+\frac{1}{2\lambda(n)}d(J^n_{\lambda(n)}x,x)^2\Big]\geqslant \limsup_nf^n(J^n_{\lambda(n)}x).
 \end{equation}
By Lemma \ref{lem1} we have the inequalities
\begin{equation}
\label{eq:inequality}
f^n(J^n_{\lambda(n)}x)+|\partial f^n|(x)d(J^n_{\lambda(n)}x, x)\geqslant f^n(x),\hspace{0.2cm}\forall n\in\mathbb{N}
\end{equation}
which then give 
\begin{equation}
\label{eq:inequality1}
\limsup_nf^n(J^n_{\lambda(n)}x)+\limsup_n|\partial f^n|(x)d(J^n_{\lambda(n)}x, x)\geqslant \limsup_nf^n(x).
\end{equation}
Assumption  $(f^n)$ has pointwise asymptotically bounded slope on $H$ implies that for some nonnegative finite valued function $C:H\to\mathbb{R}_+$ we have $\limsup_n|\partial f^n|(x)\leqslant C(x)$. Hence 
$$0\leqslant\limsup_n|\partial f^n|(x)d(J^n_{\lambda(n)}x, x)=\limsup_n|\partial f^n|(x)\cdot \lim_n d(J^n_{\lambda(n)}x, x)\leqslant C(x)\cdot 0=0.$$
From inequalities \eqref{eq:lineq} and \eqref{eq:inequality1} it follows
\begin{equation}
\label{eq:upper}
f(x)\geqslant \limsup_nf^n(J^n_{\lambda(n)}x)\geqslant \limsup_nf^n(x).
\end{equation}
The inequalities \eqref{eq:lower} and \eqref{eq:upper} imply $f(x)=\lim_nf^n(x)$.
\end{proof}
It was pointed out by Ba\v cak that Thoerem \ref{thm2} \eqref{r2} can be proved directly by employing the following two key lemmas.
\begin{lemma}\cite[Proposition 2.2.26]{Bacak}
 \label{keylemma1}
 Let $f:H\to(-\infty,+\infty]$ be a closed convex function and $x\in H$. Then the function $\lambda\mapsto J_{\lambda}x$ is continuous on $(0,+\infty)$ and 
 \begin{equation}
  \label{id1}
  \lim_{\lambda\downarrow 0}J_{\lambda}x=P_{\cl\dom f}x.
 \end{equation}
In particular if $x\in \cl\dom f$ then $\lambda\mapsto J_{\lambda}x$ is continuous on $[0,+\infty)$.
\end{lemma}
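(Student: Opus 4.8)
The plan is to prove the two assertions — continuity of $\lambda\mapsto J_\lambda x$ on $(0,+\infty)$, and $J_\lambda x\to P_{\cl\dom f}x$ as $\lambda\downarrow 0$ — separately, and then read off the last clause. Write $y_\lambda:=J_\lambda x$. For each $\mu>0$ the function $F^\mu:=f+\tfrac{1}{2\mu}d(\cdot,x)^2$ is closed and strongly convex with parameter $1/\mu$ (as in the proof of Theorem \ref{thm1}: $d(\cdot,x)^2$ has strong convexity modulus $2$ and $f$ is convex), so by Proposition \ref{prop1} its minimizer $y_\mu$ exists, is unique, and satisfies $F^\mu(y_\mu)+\tfrac{1}{2\mu}d(y_\mu,y)^2\leqslant F^\mu(y)$ for every $y\in H$. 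Moreover, since $f$ is proper, $\dom f\neq\emptyset$, so $\dom|\partial f|$ is dense in $\dom f$ by Lemma \ref{lem1}, hence dense in $C:=\cl\dom f$; as $C$ is closed and convex, $P_C x$ is well defined and unique.

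\textbf{Continuity on $(0,+\infty)$.} Fix $\lambda_0>0$ and $\lambda_n\to\lambda_0$ and set $y_n:=y_{\lambda_n}$, $y_0:=y_{\lambda_0}$. Applying the inequality $F^\mu(y_\mu)+\tfrac{1}{2\mu}d(y_\mu,y)^2\leqslant F^\mu(y)$ first with $\mu=\lambda_n$, $y=y_0$, then with $\mu=\lambda_0$, $y=y_n$, and adding the two, the terms $f(y_n)$ and $f(y_0)$ cancel; with the elementary bound $|d(y_n,x)^2-d(y_0,x)^2|\leqslant d(y_n,y_0)^2+2\,d(y_n,y_0)\,d(y_0,x)$ this leaves
\[
\Bigl(\tfrac{1}{2\lambda_n}+\tfrac{1}{2\lambda_0}\Bigr)d(y_n,y_0)^2\ \leqslant\ \Bigl|\tfrac{1}{2\lambda_0}-\tfrac{1}{2\lambda_n}\Bigr|\,\bigl(d(y_n,y_0)^2+2\,d(y_n,y_0)\,d(y_0,x)\bigr).
\]
Because $\tfrac{1}{2\lambda_n}+\tfrac{1}{2\lambda_0}-\bigl|\tfrac{1}{2\lambda_0}-\tfrac{1}{2\lambda_n}\bigr|=1/\max(\lambda_n,\lambda_0)\to 1/\lambda_0>0$ while $\bigl|\tfrac{1}{2\lambda_0}-\tfrac{1}{2\lambda_n}\bigr|\to 0$, rearranging yields $d(y_n,y_0)\to 0$ (and in passing the boundedness of $(y_n)$, so nothing has to be assumed a priori). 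Hence $\lambda\mapsto y_\lambda$ is continuous on $(0,+\infty)$.

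\textbf{The limit as $\lambda\downarrow 0$.} First I would show $\limsup_{\lambda\downarrow 0}d(y_\lambda,x)\leqslant d(x,C)$. Given $\varepsilon>0$, pick $z\in\dom|\partial f|$ with $d(z,x)\leqslant d(x,C)+\varepsilon$. Minimality of $y_\lambda$ gives $f(y_\lambda)+\tfrac{1}{2\lambda}d(y_\lambda,x)^2\leqslant f(z)+\tfrac{1}{2\lambda}d(z,x)^2$, and Lemma \ref{lem1} applied at $z$ gives $f(y_\lambda)\geqslant f(z)-|\partial f|(z)\,d(z,y_\lambda)$; combining and multiplying by $2\lambda$,
\[
d(y_\lambda,x)^2\ \leqslant\ d(z,x)^2+2\lambda\,|\partial f|(z)\,d(z,x)+2\lambda\,|\partial f|(z)\,d(y_\lambda,x),
\]
a quadratic inequality for $d(y_\lambda,x)$ with coefficients bounded as $\lambda\downarrow 0$, whence $\limsup_{\lambda\downarrow 0}d(y_\lambda,x)\leqslant d(z,x)\leqslant d(x,C)+\varepsilon$. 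Letting $\varepsilon\downarrow 0$ and using $d(y_\lambda,x)\geqslant d(x,C)$ (since $y_\lambda\in\dom f\subseteq C$) gives $d(y_\lambda,x)\to d(x,C)$. To turn this into strong convergence, put $p:=P_C x$; for $t\in(0,1)$ the point $(1-t)p\oplus t\,y_\lambda$ lies in $C$, so minimality of $p$ together with the $\CAT(0)$ inequality $d\bigl(x,(1-t)p\oplus t\,y_\lambda\bigr)^2\leqslant(1-t)d(x,p)^2+t\,d(x,y_\lambda)^2-t(1-t)d(p,y_\lambda)^2$ gives, after cancelling, dividing by $t$ and letting $t\downarrow 0$,
\[
d(p,y_\lambda)^2\ \leqslant\ d(x,y_\lambda)^2-d(x,p)^2\ =\ d(x,y_\lambda)^2-d(x,C)^2\ \longrightarrow\ 0 .
\]
Thus $J_\lambda x\to P_{\cl\dom f}x$ as $\lambda\downarrow 0$. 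Finally, if $x\in C=\cl\dom f$ then $P_C x=x$, so with the natural convention $J_0x:=x$ one has $\lim_{\lambda\downarrow 0}J_\lambda x=J_0x$, i.e. right-continuity at $0$, and combined with the previous paragraph this gives continuity on all of $[0,+\infty)$.

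\textbf{Where the difficulty sits.} I expect the real obstacle to be the step from convergence of the \emph{distances} $d(y_\lambda,x)\to d(x,C)$ to the strong convergence $y_\lambda\to P_C x$: a general Hadamard space carries no Kadec--Klee-type property, so one cannot conclude via weak subsequential limits alone, and must instead combine the variational characterization of $P_C x$ with the quadratic convexity inequality along the geodesic $[P_C x,\,y_\lambda]$, as above. A subsidiary point needing care is the choice of the lower bound on $f(y_\lambda)$ that cancels the $-f$ contribution in the $\limsup$ estimate — it is here that the density of $\dom|\partial f|$ and the supremum formula of Lemma \ref{lem1} are essential.
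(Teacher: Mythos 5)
Your proof is correct, but there is nothing in the paper to compare it against: Lemma \ref{keylemma1} is imported verbatim from B\v{a}c\'ak's monograph (Proposition 2.2.26 there) and the paper supplies no proof of it. What you have written is a complete, self-contained derivation using only ingredients already available in the paper. The continuity part via the two symmetrized variational inequalities $F^{\mu}(y_{\mu})+\tfrac{1}{2\mu}d(y_{\mu},y)^{2}\leqslant F^{\mu}(y)$ (with the roles of $\lambda_n$ and $\lambda_0$ swapped and the $f$-terms cancelling) is clean and even yields a quantitative local modulus of continuity, $d(y_n,y_0)\leqslant 2\max(\lambda_n,\lambda_0)\,\bigl|\tfrac{1}{2\lambda_0}-\tfrac{1}{2\lambda_n}\bigr|\,d(y_0,x)$, which is more than the qualitative statement requires; the more common route in the literature goes through the resolvent identity and nonexpansiveness of $J_{\lambda}$, so your version is arguably more elementary. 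For the $\lambda\downarrow 0$ limit, your two-step scheme (first $d(y_{\lambda},x)\to d(x,C)$ using a point $z\in\dom|\partial f|$ near the projection together with the supremum formula of Lemma \ref{lem1}, then the variational characterization of $P_Cx$ along the geodesic $[P_Cx,y_{\lambda}]$ to upgrade distance convergence to strong convergence) is exactly the right way to avoid any Kadec--Klee-type appeal, and your diagnosis of where the difficulty sits is accurate. Two small points: the lemma tacitly assumes $f$ proper (otherwise $P_{\cl\dom f}$ is undefined), which you correctly flag; and when you divide by $d(y_n,y_0)$ in the continuity step you should note the trivial case $y_n=y_0$ separately, as you implicitly do.
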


\begin{lemma}\cite[Lemma 5.1.4]{Bacak}
 \label{keylemma2}
 Let $f:H\to (-\infty,+\infty]$ be a closed convex function. Then for any $x\in H$ and $\lambda\in(0,+\infty)$ we have 
 \begin{equation}
  \label{id2}
  \frac{f(x)-f_{\lambda}(x)}{\lambda}\leqslant\frac{|\partial f|^2(x)}{2}.
 \end{equation}
\end{lemma}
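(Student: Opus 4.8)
The plan is to read the estimate off directly from the variational formula for the slope in Lemma \ref{lem1}; no curvature is really needed beyond what makes $f_\lambda$ well defined, and in fact \eqref{id2} holds verbatim in any metric space.

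First I would dispose of the degenerate cases: we may assume $f$ is proper, and if $x\notin\dom|\partial f|$ then $|\partial f|(x)=+\infty$, so there is nothing to prove. Assume therefore $x\in\dom|\partial f|$, whence $f(x)$ and $|\partial f|(x)$ are both finite. By Lemma \ref{lem1}, for every $y\in H\setminus\{x\}$ we have $\max\{f(x)-f(y),0\}\leqslant|\partial f|(x)\,d(x,y)$; dropping the maximum, and noting the case $y=x$ is trivial, gives the affine minorant
\[
f(y)\;\geqslant\; f(x)-|\partial f|(x)\,d(x,y)\qquad\text{for all }y\in H.
\]
Substituting this into the definition \eqref{eq:MY} of the Moreau approximate yields, for every $y\in H$,
\[
f(y)+\frac{1}{2\lambda}d(y,x)^2\;\geqslant\; f(x)+\Big(\tfrac{1}{2\lambda}d(x,y)^2-|\partial f|(x)\,d(x,y)\Big).
\]

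The decisive step is then an elementary one-variable minimisation. Writing $r:=d(x,y)\in[0,+\infty)$ and $s:=|\partial f|(x)\geqslant 0$, the bracketed term is the convex quadratic $\varphi(r)=\tfrac{1}{2\lambda}r^2-sr$, whose minimum over $r\in\mathbb{R}$, attained at $r=\lambda s\geqslant 0$, equals $-\tfrac{\lambda}{2}s^2$. Hence $f(y)+\tfrac{1}{2\lambda}d(y,x)^2\geqslant f(x)-\tfrac{\lambda}{2}|\partial f|^2(x)$ for every $y\in H$, and taking the infimum over $y$ on the left gives $f_\lambda(x)\geqslant f(x)-\tfrac{\lambda}{2}|\partial f|^2(x)$, which is exactly \eqref{id2} after rearrangement.

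The only point I would be careful about is the legitimacy of this infimum exchange: one bounds the function $y\mapsto f(y)+\frac{1}{2\lambda}d(y,x)^2$ from below by an expression depending on $y$ only through the distance $d(x,y)$, and then minimises that lower bound over all nonnegative distances. This is valid precisely because $\varphi(r)\geqslant-\tfrac{\lambda}{2}s^2$ holds for every $r\geqslant 0$, regardless of whether the optimal distance $\lambda s$ is realised by an actual point of $H$ --- which it need not be, for instance when $H$ is bounded with diameter less than $\lambda s$. Beyond this there is no genuine obstacle; the Hadamard hypothesis enters only through Proposition \ref{prop1}, which guarantees that $f_\lambda(x)$ (and $J_\lambda x$) are meaningful to begin with.
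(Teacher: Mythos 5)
Your proof is correct and complete: the global sup-characterization of the slope from Lemma \ref{lem1} gives the minorant $f(y)\geqslant f(x)-|\partial f|(x)\,d(x,y)$, and the completion of the square $\tfrac{1}{2\lambda}r^2-sr=\tfrac{1}{2\lambda}(r-\lambda s)^2-\tfrac{\lambda}{2}s^2\geqslant-\tfrac{\lambda}{2}s^2$ then yields \eqref{id2} after taking the infimum in \eqref{eq:MY}; the degenerate case $|\partial f|(x)=+\infty$ and the non-attainment of the optimal distance are both handled properly. Note that the paper itself states Lemma \ref{keylemma2} without proof, citing it from Ba\v cak's monograph, so there is no in-paper argument to compare against --- your derivation is the standard one and relies only on Lemma \ref{lem1}, not on any curvature hypothesis beyond what makes $f_{\lambda}$ well defined.
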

Without loss of generality let $x\in\cl\dom f$. From triangle inequality for each $n\in\mathbb{N}$ we have the upper estimate
\begin{equation}
 \label{triangleineq}
 |f^n(x)-f(x)|\leqslant|f^n(x)-f^n_{\lambda}(x)|+|f^n_{\lambda}(x)-f_{\lambda}(x)|+|f_{\lambda}(x)-f(x)|.
\end{equation}
By Lemma \ref{keylemma2} we have $|f^n(x)-f^n_{\lambda}(x)|\leqslant \lambda |\partial f^n|^2(x)/2$ and for sufficiently large $n$ assumption $(f^n)\in A(H)$ implies $|f^n(x)-f^n_{\lambda}(x)|\leqslant \lambda C(x)$ for some finite valued function $C(x)$. Hence this term vanishes as $\lambda\downarrow 0$. 
The middle term in \eqref{triangleineq} vanishes by assumption $\lim_nf^n_{\lambda}(x)=f_{\lambda}(x)$ for each $x\in H$.
On the other hand Lemma \ref{keylemma1} implies $\lim_{\lambda\downarrow 0}J_{\lambda}x=x$. 
The evident chain of inequalities $f(J_{\lambda}x)\leqslant f_{\lambda}(x)\leqslant f(x)$ together with lsc of $f$ imply $|f_{\lambda}(x)-f(x)|\to 0$ as $\lambda\downarrow 0$.
An application of Theorem \ref{Bacak} and Theorem \ref{Bacak2} yield the following. 
\begin{thm}
\label{mainthm}
Let $(H,d)$ be a Hadamard space and $f, f^n:H\to(-\infty,+\infty]$ be a sequence of proper closed convex functions. If $(f^n)\in A(H)$, then $M-\lim_nf^n=f$ if and only if $\lim_n f^n(x)=f(x)$ and $\lim_nJ^n_{\lambda}x=J_{\lambda}x$ for each $x\in H$.
\end{thm}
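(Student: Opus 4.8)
The plan is to obtain Theorem~\ref{mainthm} by chaining the results already in place, since each direction of the claimed equivalence factors through the pointwise convergence of the Moreau approximates $f^n_\lambda\to f_\lambda$. No fresh estimate is required: the substantive analytic work has been done in Theorems~\ref{thm1} and \ref{thm2}, and this statement is essentially their bookkeeping consequence, exactly as the remark preceding it anticipates. Throughout I would keep in mind that properness of $f$ and the $f^n$, together with strong convexity of $d(\cdot,x)^2$ and Proposition~\ref{prop1}, guarantees that all the proximal mappings $J_\lambda x$, $J^n_\lambda x$ are well defined, so every formula below makes sense.

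For the forward implication, assume $M-\lim_n f^n=f$. First I would apply Theorem~\ref{Bacak} to get $\lim_n f^n_\lambda(x)=f_\lambda(x)$ for every $x\in H$ and every $\lambda>0$. Now I would bring in the standing hypothesis $(f^n)\in A(H)$ and feed this pointwise convergence of Moreau approximates into Theorem~\ref{thm2}; its conclusion yields simultaneously $\lim_n J^n_\lambda x=J_\lambda x$ and $\lim_n f^n(x)=f(x)$ for every $x\in H$, which is precisely the right-hand side of the equivalence. It is worth flagging in the write-up that Mosco convergence alone does \emph{not} force the pointwise convergence $f^n(x)\to f(x)$ — condition (ii) only supplies a recovery sequence $y_n\to x$, not the constant sequence $x$ — so the detour through the Moreau approximates and the slope control $A(H)$ is genuinely being used here, not merely for convenience.

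For the reverse implication, assume $\lim_n f^n(x)=f(x)$ and $\lim_n J^n_\lambda x=J_\lambda x$ for every $x\in H$. Together with $(f^n)\in A(H)$ these are exactly the hypotheses of Theorem~\ref{thm1}, which gives $\lim_n f^n_\lambda(x)=f_\lambda(x)$ for all $x\in H$ and all $\lambda>0$. An application of Theorem~\ref{Bacak2} then upgrades this pointwise convergence of Moreau approximates to $M-\lim_n f^n=f$, closing the cycle of equivalences that parallels Attouch's Theorem~\ref{Attouch}.

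The main obstacle at the level of Theorem~\ref{mainthm} itself is essentially nil: once Theorems~\ref{thm1} and \ref{thm2} are available the argument is a two-line diagram chase in each direction. The real difficulty has been displaced backwards into those two results, specifically into showing that the cross terms $|\partial f^n|(J_\lambda x)\,d(J_\lambda x,J^n_\lambda x)$ and $|\partial f^n|(x)\,d(J^n_{\lambda(n)}x,x)$ vanish in the limit, which is where the pointwise asymptotically bounded slope hypothesis $(f^n)\in A(H)$ does its work (through $\limsup_n|\partial f^n|\le C$ multiplied against a null sequence of distances). So in the final write-up I would state Theorem~\ref{mainthm} as a corollary, cite Theorems~\ref{Bacak}, \ref{Bacak2}, \ref{thm1}, \ref{thm2} in the order above, and make explicit only that the convergences obtained are for all $x\in H$ and all $\lambda>0$, so that the hypotheses of each invoked theorem are met verbatim.
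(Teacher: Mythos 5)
Your proposal is correct and follows essentially the same route as the paper: the forward direction chains Theorem~\ref{Bacak} into Theorem~\ref{thm2} (using $(f^n)\in A(H)$), and the reverse direction chains Theorem~\ref{thm1} into Theorem~\ref{Bacak2}, exactly as in the paper's own two-line diagram chase. The only difference is the order in which the two implications are presented, which is immaterial.
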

\begin{proof}
 Assume $(f^n)\in A(H)$ and let $\lim_nf^n(x)=f(x)$ for all $x\in H$. Then by Theorem \ref{thm1} $\lim_nJ^n_{\lambda}x=J_{\lambda}x$ implies $\lim_nf^n_{\lambda}(x)=f_{\lambda}(x)$ for all $x\in H$. Theorem \ref{Bacak2} in turn yields $M-\lim_nf^n(x)=f(x)$. Now suppose $M-\lim_nf^n(x)=f(x)$ then by Theorem \ref{Bacak} we get $\lim_nf^n_{\lambda}(x)=f_{\lambda}(x)$ for each $x\in H$. Since by assumption $(f^n)\in A(H)$ then Theorem \ref{thm2} implies $\lim_nf^n(x)=f(x)$ and $\lim_nJ^n_{\lambda}x=J_{\lambda}x$ for all $x\in H$.
\end{proof}

\subsection{A normalization condition}
Let $f^n,f:H\to(-\infty,+\infty]$ be a family of proper closed convex functions. We say the sequence of functions $(f^n)_{n\in\mathbb{N}}$ satisfies the {\em normalization condition} if there exists some sequence $(x_n)_{n\in\mathbb{N}}\subset H$ and $x\in H$ such that $x_n\to x, f^n(x_n)\to f(x)$ and $|\partial f^n|(x_n)\to|\partial f|(x)$ as $n\uparrow+\infty$. For a sequence of functions $(f^n)_{n\in\mathbb{N}}$ that Mosco converges to some function $f$ we get the following result.
\begin{lemma}
 \label{normalization1}
 A sequence of closed convex functions $(f^n)_{n\in\mathbb{N}},f:H\to(-\infty,+\infty]$ satisfies the normalization condition whenever $M-\lim_nf^n=f$. 
\end{lemma}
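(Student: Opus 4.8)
The plan is to exhibit the point $x$ and the sequence $(x_n)$ demanded by the normalization condition as a diagonal combination of resolvents of the limit function $f$. Since $f$ is proper closed convex, $\dom|\partial f|$ is nonempty (it is dense in $\dom f$ by Lemma \ref{lem1}); fix once and for all some $x\in\dom|\partial f|$, so that $f(x)$ and $|\partial f|(x)$ are finite and $x\in\cl\dom f$. First I record two facts about the resolvent of $f$ alone. For every $\lambda>0$, Lemma \ref{lem2} gives $J_\lambda x\in\dom|\partial f|$ together with $|\partial f|(J_\lambda x)\le d(J_\lambda x,x)/\lambda$; on the other hand, applying the strong-convexity inequality \eqref{eq:strcvxidentity} of Proposition \ref{prop1} to the function $z\mapsto f(z)+\frac1{2\lambda}d(z,x)^2$, which is strongly convex with parameter $1/\lambda$ and has minimizer $J_\lambda x$, and evaluating at $z=x$ yields $f_\lambda(x)+\frac1{2\lambda}d(J_\lambda x,x)^2\le f(x)$, hence $f(x)-f(J_\lambda x)\ge\frac1\lambda d(J_\lambda x,x)^2$, and therefore $d(J_\lambda x,x)/\lambda\le|\partial f|(x)$ by Lemma \ref{lem1}. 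Since $J_\lambda x\to x$ as $\lambda\downarrow 0$ (Lemma \ref{keylemma1}, using $x\in\cl\dom f$), since $|\partial f|$ is lower semicontinuous (Lemma \ref{lem1}), and since $f(J_\lambda x)\le f_\lambda(x)\le f(x)$ with $f$ lsc, the squeeze $|\partial f|(x)\le\liminf_{\lambda\downarrow 0}|\partial f|(J_\lambda x)\le\liminf_{\lambda\downarrow 0}d(J_\lambda x,x)/\lambda\le|\partial f|(x)$ forces
$$\lim_{\lambda\downarrow 0}|\partial f|(J_\lambda x)=\lim_{\lambda\downarrow 0}\frac{d(J_\lambda x,x)}{\lambda}=|\partial f|(x),\qquad\lim_{\lambda\downarrow 0}f(J_\lambda x)=f(x).$$

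Next I bring in Mosco convergence. For each fixed $\lambda>0$, Ba\v cak's Theorem \ref{Bacak} gives $J^n_\lambda x\to J_\lambda x$ and $f^n_\lambda(x)\to f_\lambda(x)$ as $n\to\infty$, whence $f^n(J^n_\lambda x)=f^n_\lambda(x)-\frac1{2\lambda}d(J^n_\lambda x,x)^2\to f(J_\lambda x)$. The key auxiliary estimate is a lower semicontinuity of the slope along Mosco convergence: if $y_n\to y$ in $H$ with $y\in\dom f$, then $|\partial f|(y)\le\liminf_n|\partial f^n|(y_n)$. To prove it, fix any $z\ne y$ with $f(z)<+\infty$ (the case $f(z)=+\infty$ being vacuous) and a \emph{recovery sequence} $z_n\to z$ with $\limsup_n f^n(z_n)\le f(z)$ furnished by Mosco condition \eqref{ii}; since $y_n\to y$ also weakly, Mosco condition \eqref{i} gives $\liminf_n f^n(y_n)\ge f(y)$, while $d(y_n,z_n)\to d(y,z)>0$. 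A routine $\liminf$ computation (using $\liminf_n(f^n(y_n)-f^n(z_n))\ge f(y)-f(z)$, division by $d(y_n,z_n)\to d(y,z)>0$, and monotonicity of $t\mapsto\max\{t,0\}$) then yields $\liminf_n\frac{\max\{f^n(y_n)-f^n(z_n),0\}}{d(y_n,z_n)}\ge\frac{\max\{f(y)-f(z),0\}}{d(y,z)}$, and since the left-hand quantity is dominated by $|\partial f^n|(y_n)$ through Lemma \ref{lem1}, taking the supremum over $z\ne y$ and invoking Lemma \ref{lem1} again proves the claim. Applying this with $y_n=J^n_\lambda x$ and $y=J_\lambda x\in\dom f$, and combining with $|\partial f^n|(J^n_\lambda x)\le d(J^n_\lambda x,x)/\lambda\to d(J_\lambda x,x)/\lambda$ from Lemma \ref{lem2}, we obtain, for each fixed $\lambda>0$,
$$|\partial f|(J_\lambda x)\ \le\ \liminf_n|\partial f^n|(J^n_\lambda x)\ \le\ \limsup_n|\partial f^n|(J^n_\lambda x)\ \le\ \frac{d(J_\lambda x,x)}{\lambda}.$$

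Finally I diagonalize. Choose $\lambda_k\downarrow 0$. For each $k$, all the convergences above hold for the fixed parameter $\lambda_k$ as $n\to\infty$, so I may pick indices $N_k$, strictly increasing in $k$, such that for every $n\ge N_k$ one has $d(J^n_{\lambda_k}x,J_{\lambda_k}x)<\tfrac1k$, $|f^n(J^n_{\lambda_k}x)-f(J_{\lambda_k}x)|<\tfrac1k$, $|\partial f^n|(J^n_{\lambda_k}x)<\frac{d(J_{\lambda_k}x,x)}{\lambda_k}+\tfrac1k$, and $|\partial f^n|(J^n_{\lambda_k}x)>|\partial f|(J_{\lambda_k}x)-\tfrac1k$. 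Set $x_n:=J^n_{\lambda_k}x$ for $N_k\le n<N_{k+1}$ and $x_n:=x$ for $n<N_1$. As $n\to\infty$ the stage index $k=k(n)$ tends to $\infty$, so by the triangle inequality and the limits collected in the first paragraph we get $x_n\to x$ and $f^n(x_n)\to f(x)$, while the two-sided bound just displayed together with $\lim_k\frac{d(J_{\lambda_k}x,x)}{\lambda_k}=\lim_k|\partial f|(J_{\lambda_k}x)=|\partial f|(x)$ forces, by squeezing, $|\partial f^n|(x_n)\to|\partial f|(x)$. Hence $(f^n)$ satisfies the normalization condition with this $x$ and this $(x_n)$. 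The one genuinely delicate point is the slope lower-semicontinuity estimate: since the slope is defined as a supremum, it is not obviously controlled from below by Mosco convergence, and the trick is precisely to test it against the recovery sequences provided by condition \eqref{ii} and pass to the $\liminf$ term by term; everything else is Ba\v cak's theorem together with the standard resolvent estimates of Lemmas \ref{lem1}, \ref{lem2} and \ref{keylemma1}.
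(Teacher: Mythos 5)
Your proof is correct, and it takes a genuinely different route from the paper's in the one place where the argument is delicate. Both proofs choose the approximating sequence among resolvents and both prove the lower slope estimate $|\partial f|(y)\leqslant\liminf_n|\partial f^n|(y_n)$ by testing the quotient in Lemma \ref{lem1} against the recovery sequences supplied by Mosco condition \eqref{ii} — that part is essentially identical. The difference is twofold. First, the paper fixes a single $\lambda>0$ and an arbitrary $x_0$, takes $x_n:=J^n_{\lambda}x_0$, $x:=J_{\lambda}x_0$, and must therefore prove the full two-sided convergence $|\partial f^n|(J^n_{\lambda}x_0)\to|\partial f|(J_{\lambda}x_0)$ at the fixed level $\lambda$; for the upper bound it resorts to near-maximizers $y_n$ of the slope quotient, extraction of a weakly convergent subsequence, and weak lower semicontinuity of the metric. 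You instead anchor the construction at a point $x\in\dom|\partial f|$, let $\lambda_k\downarrow 0$, and diagonalize, so that the upper bound comes for free from Lemma \ref{lem2} ($|\partial f^n|(J^n_{\lambda}x)\leqslant d(J^n_{\lambda}x,x)/\lambda$) matched against the strong-convexity estimate $d(J_{\lambda}x,x)/\lambda\leqslant|\partial f|(x)$ and the lower semicontinuity of $|\partial f|$; the two bounds squeeze to the same value $|\partial f|(x)$ as $\lambda_k\downarrow 0$. What your approach buys is robustness: it avoids entirely the weak-compactness step of the paper (where the weak limit $y$ of the near-maximizers could a priori coincide with $x$, putting $d(x,y)=0$ in a denominator, and where a limsup along a subsequence is compared with the limsup of the full sequence), replacing it with quantitative resolvent estimates already available in Lemmas \ref{lem1}, \ref{lem2} and Proposition \ref{prop1}. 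The price is the extra bookkeeping of the diagonal extraction and the preliminary limits $\lim_{\lambda\downarrow 0}d(J_{\lambda}x,x)/\lambda=\lim_{\lambda\downarrow 0}|\partial f|(J_{\lambda}x)=|\partial f|(x)$, which you justify correctly. One small remark: you should note explicitly, as the definition of the normalization condition requires $f$ proper, that properness is what guarantees $\dom|\partial f|\neq\emptyset$ via the density statement of Lemma \ref{lem1}; you do say this, so the argument is complete.
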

\begin{proof}
 Let $x_0\in H$ then $M-\lim_nf^n=f$ implies by Theorem \ref{Bacak} we have $\lim_nJ^n_{\lambda}x_0=J_{\lambda}x_0$ for any $\lambda>0$. Take $x_n:=J^n_{\lambda}x_0$ and $x:=J_{\lambda}x_0$. Then this means $\lim_nx_n=x$. We need to show the other two properties. Note that by definition of the proximal mapping $J_{\lambda}$ we have 
 $$f^n(x_n)+\frac{1}{2\lambda}d(x_0,x_n)^2\leqslant f^n(y)+\frac{1}{2\lambda}d(x_0,y)^2,\quad\forall y\in H.$$
 Let $(\xi_n)_{n\in\mathbb{N}}\subset H$ be a sequence strongly converging to $x$. From the last inequality we obtain in particular that 
 $$f^n(x_n)+\frac{1}{2\lambda}d(x_0,x_n)^2\leqslant f^n(\xi_n)+\frac{1}{2\lambda}d(x_0,\xi_n)^2,\quad\forall n\in \mathbb{N}$$
 implying $\limsup_nf^n(x_n)\leqslant\limsup_nf^n(\xi_n)_{n\in\mathbb{N}}$. On the other hand by definition of Mosco convergence we can have $(\xi_n)_{n\in\mathbb{N}}$ such that $\limsup_nf^n(\xi_n)\leqslant f(x)$. Hence $\limsup_nf^n(x_n)\leqslant f(x)$. Moreover $\lim_nx_n=x$ implies in particular that $x_n\overset{w}\to x$. Again by definition of Mosco convergence we obtain $f(x)\leqslant\liminf_nf^n(x_n)$. Therefore $f(x)=\lim_nf^n(x_n)$ as desired. Next we need to show the property about the slopes. Note that by Lemma \ref{lem1} we have 
 $$\frac{\max\{f^n(x_n)-f^n(y),0\}}{d(x_n,y)}\leqslant|\partial f^n|(x_n),\quad\forall y\in H,\forall n\in\mathbb{N}.$$
 Again by Mosco convergence for each $y\in H$ there is a sequence $(\xi_n)_{n\in\mathbb{N}}$ strongly converging to $y$ such that $\limsup_nf^n(\xi_n)\leqslant f(y)$. Applying the last inequality for $\xi_n$ we have 
 $$\frac{\max\{f^n(x_n)-f^n(\xi_n),0\}}{d(x_n,y)}\leqslant|\partial f^n|(x_n),\quad\forall n\in\mathbb{N}$$
 which in turn yields $$\frac{\max\{f(x)-\limsup_nf^n(\xi_n),0\}}{d(x,y)}\leqslant\liminf_n|\partial f^n|(x_n).$$
 Using $\limsup_nf^n(\xi_n)\leqslant f(y)$ we get
 $$\frac{\max\{f(x)-f(y),0\}}{d(x,y)}\leqslant\liminf_n|\partial f^n|(x_n).$$
 Because the last inequality holds for any $y\in H$ then $|\partial f|(x)\leqslant\liminf_n|\partial f^n|(x_n)$. Now by definition \eqref{eq:slope} we obtain 
 $$|\partial f^n|(x_n)\leqslant \frac{\max\{f^n(x_n)-f^n(y_n),0\}}{d(x_n,y_n)}+\varepsilon_n,\quad\forall n\in\mathbb{N}$$
 for sufficiently small $\varepsilon_n>0$ and $y_n$ sufficiently close to $x_n$. Note that strong convergence of $x_n$ to $x$ implies that for any $\delta>0$ all but finitely many of the terms $y_n\in \mathbb{B}(x,\delta)$. In particular $(y_n)$ is a bounded sequence hence it has a weakly convergent subsequence $(y_{n_k})$. But $\cl\mathbb{B}(x,\delta)$ is a closed convex set and  since weak convergence coincides on bounded sets with the so called $\Delta$-convergence (see \cite[Chapter 3]{Berdellima}) then by \cite[Lemma 3.2.1]{Bacak} $y_{n_k}\overset{w}\to y\in\cl\mathbb{B}(x,\delta)$. 
 One can choose $(\varepsilon_n)$ such that $\lim_{k}\varepsilon_{n_k}=0$.  Moreover $d(x,\cdot)$ is weakly lsc (\cite[Corollary 3.2.4]{Bacak} implying
 $$\limsup_k|\partial f^{n_k}|(x_{n_k})\leqslant\frac{\max\{f(x)-\liminf_kf^{n_k}(y_{n_k}),0\}}{d(x,y)}.$$
 By definition of Mosco convergence follows $\liminf_nf^n(y_n)\geqslant f(y)$. Hence 
 $$\limsup_n|\partial f^n|(x_n)\leqslant\limsup_k|\partial f^{n_k}|(x_{n_k})\leqslant\frac{\max\{f(x)-f(y),0\}}{d(x,y)}.$$
 The last inequality implies $\limsup_n|\partial f^n|(x_n)\leqslant|\partial f|(x)$. 
\end{proof}
A family of functions $f^n:H\to(-\infty,+\infty]$ is said to be {\em equi locally Lipschitz} if for any bounded set $K\subseteq H$ there is a constant $C_K>0$ such that 
\begin{equation}
 \label{equicontinuity}
 |f^n(x)-f^n(y)|\leqslant C_Kd(x,y),\quad\forall x,y\in K, \forall n\in\mathbb{N}.
\end{equation}

\begin{lemma}
 \label{l:equicontinuity}
 Let $f^n:H\to(-\infty,+\infty]$ be a sequence of closed convex functions such that $\lim_nf^n_{\lambda}(x_0)=\alpha_0\in\mathbb{R}$ for some $x_0\in H$ and some $\lambda>0$. Then $(f^n_{\lambda})_{n\in\mathbb{N}}$ are equi locally Lipschitz functions. 
\end{lemma}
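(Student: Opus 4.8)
The plan is to prove that, for every $R>0$, the functions $f^n_\lambda$ are Lipschitz on the ball $\mathbb{B}(x_0,R)$ with a constant that does not depend on $n$; since any bounded $K\subseteq H$ lies in such a ball, this yields the claim after the finitely many exceptional indices are treated separately. (We may assume every $f^n$ is proper: this is automatic for all large $n$ because $f^n_\lambda(x_0)\to\alpha_0\in\mathbb{R}$, and for the remaining indices we only add finitely many constants to the eventual $C_K$.) The starting point is the elementary metric estimate
\begin{equation}\label{eq:prox-compare}
 |f^n_\lambda(x)-f^n_\lambda(y)|\ \le\ \frac{d(x,y)}{2\lambda}\Big(2\max\{d(J^n_\lambda x,x),\,d(J^n_\lambda y,y)\}+d(x,y)\Big),
\end{equation}
valid for all $x,y\in H$: writing $q=J^n_\lambda y$ and using that $J^n_\lambda x$ minimizes $z\mapsto f^n(z)+\tfrac1{2\lambda}d(z,x)^2$, one has $f^n_\lambda(x)-f^n_\lambda(y)\le\tfrac1{2\lambda}\big(d(q,x)^2-d(q,y)^2\big)\le\tfrac{d(x,y)}{2\lambda}\big(2d(q,y)+d(x,y)\big)$, and symmetrically after exchanging $x$ and $y$. (Existence and uniqueness of the proximal points, and finiteness of $f^n_\lambda$, follow from Proposition \ref{prop1} applied to the strongly convex map $z\mapsto f^n(z)+\tfrac1{2\lambda}d(z,x)^2$, as in the proof of Theorem \ref{thm1}.) By \eqref{eq:prox-compare} it suffices to bound $d(J^n_\lambda x,x)$ uniformly for $x\in\mathbb{B}(x_0,R)$ and all large $n$.

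The heart of the matter is a uniform bound on the displacement at the base point. Applying \eqref{eq:strcvxidentity} of Proposition \ref{prop1} to the ($\tfrac1\lambda$-strongly convex) function $z\mapsto f^n(z)+\tfrac1{2\lambda}d(z,x_0)^2$, whose minimizer is $q_n:=J^n_\lambda x_0$, and evaluating at $z=x_0$, gives $f^n(q_n)+\tfrac1\lambda d(q_n,x_0)^2\le f^n(x_0)$; hence $\tfrac1\lambda d(q_n,x_0)^2\le f^n(x_0)-f^n(q_n)\le|\partial f^n|(x_0)\,d(q_n,x_0)$ by Lemma \ref{lem1}, that is, $d(J^n_\lambda x_0,x_0)\le\lambda\,|\partial f^n|(x_0)$. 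Because $(f^n)\in A(H)$, there are $R_0>0$ and $n_0\in\mathbb{N}$ with $d(J^n_\lambda x_0,x_0)\le R_0$ for all $n\ge n_0$, and after enlarging $n_0$ we may also assume $|f^n_\lambda(x_0)-\alpha_0|\le1$ there. Lemma \ref{lem2} now gives $q_n\in\dom|\partial f^n|$ with $|\partial f^n|(q_n)\le d(q_n,x_0)/\lambda\le R_0/\lambda$, so Lemma \ref{lem1} yields the uniform sublinear minorant
\begin{equation}\label{eq:minorant}
 f^n(z)\ \ge\ f^n(q_n)-\frac{R_0}{\lambda}\,d(q_n,z)\qquad(z\in H,\ n\ge n_0),
\end{equation}
while $f^n(q_n)=f^n_\lambda(x_0)-\tfrac1{2\lambda}d(q_n,x_0)^2$ stays bounded as $n\to\infty$.

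To close the argument, fix $x\in\mathbb{B}(x_0,R)$ and $n\ge n_0$, and set $p=J^n_\lambda x$, $t=d(p,x)$. Since $q_n$ is admissible in the infimum defining $f^n_\lambda(x)$ and $d(q_n,x)\le R_0+R$, we get an $R$-dependent upper bound $f^n_\lambda(x)\le f^n(q_n)+\tfrac1{2\lambda}d(q_n,x)^2\le M_R$; on the other hand, by \eqref{eq:minorant} and $d(q_n,p)\le R_0+R+t$,
\[
 f^n_\lambda(x)=f^n(p)+\frac{t^2}{2\lambda}\ \ge\ f^n(q_n)-\frac{R_0}{\lambda}(R_0+R+t)+\frac{t^2}{2\lambda}.
\]
Comparing the two bounds gives a quadratic inequality $t^2-2R_0t-E_R\le0$ in which $E_R$ is bounded above, uniformly in $n\ge n_0$, by a nonnegative constant depending only on $R,R_0,\lambda,\alpha_0$; hence $d(J^n_\lambda x,x)=t\le R_0+\sqrt{R_0^2+E_R}=:\rho_R$, uniformly for $x\in\mathbb{B}(x_0,R)$ and $n\ge n_0$. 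Substituting $\rho_R$ into \eqref{eq:prox-compare} shows that $f^n_\lambda$ is $\big((\rho_R+R)/\lambda\big)$-Lipschitz on $\mathbb{B}(x_0,R)$ for all $n\ge n_0$. For each of the finitely many indices $n<n_0$ one runs exactly the same scheme with $x_0$ replaced by an arbitrary point of $\dom|\partial f^n|$ (nonempty by Lemma \ref{lem1}), which shows each such $f^n_\lambda$ is locally Lipschitz; the maximum of the resulting finitely many constants together with $(\rho_R+R)/\lambda$ serves as $C_K$ for any bounded $K\subseteq\mathbb{B}(x_0,R)$. The main obstacle will be the coupling in the middle paragraph: one has to notice that the minorant \eqref{eq:minorant} furnished by Lemmas \ref{lem1}--\ref{lem2} has precisely \emph{sublinear} growth, which is exactly what makes the quadratic inequality for $t$ solvable, and that the bound on $d(J^n_\lambda x_0,x_0)$ is the place where the asymptotic-slope hypothesis $(f^n)\in A(H)$ is used.
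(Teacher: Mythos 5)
Your computations are individually correct, and your route is genuinely different from the paper's: the paper derives the uniform quadratic minorant $f^n(x)\geqslant f^n_{\lambda}(x_0)-\tfrac{1}{2\lambda}d(x_0,x)^2\geqslant-\tfrac{1}{2\lambda}(d(x_0,x)^2+1)$ straight from the definition of the Moreau envelope and then cites \cite[Theorem 2.64 (ii)]{Attouch}, whereas you build a self-contained metric argument (the comparison inequality via the two proximal points, the displacement bound at the base point, the sublinear minorant from Lemmas \ref{lem1} and \ref{lem2}, and the quadratic inequality in $t$). That part of the work is sound and would be worth keeping.

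The genuine gap is in your middle paragraph: to get $d(J^n_{\lambda}x_0,x_0)\leqslant\lambda|\partial f^n|(x_0)\leqslant R_0$ you invoke $(f^n)\in A(H)$, which is \emph{not} a hypothesis of the lemma; the only assumption is $\lim_nf^n_{\lambda}(x_0)=\alpha_0\in\mathbb{R}$. This assumption alone does not yield the uniform displacement bound on which everything downstream rests. Take $H=\mathbb{R}$, $\lambda=1$, $f^n(y)=-ny+n^2/2$: then $f^n_1(x)=-nx$, so $f^n_1(0)=0$ for every $n$, yet $J^n_1 0=n\to\infty$ and $|\partial f^n|(0)=n\to\infty$. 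The same example shows the envelopes $f^n_1(x)=-nx$ are not equi Lipschitz on any ball, so the lemma as literally stated is false and no proof of it can succeed; your extra hypothesis $(f^n)\in A(H)$ (or the normalization condition under which the lemma is actually applied in Theorem \ref{th:allaAttouch}) is not a harmless shortcut but a substantive strengthening of the statement that must be made explicit. Note that the paper's own proof does not escape this example either: the quadratic minorant $f^n(x)\geqslant-\tfrac12(x^2+1)$ holds there with $r=1/2$, and the cited result of Attouch requires $\lambda$ strictly below a threshold determined by $r$, a restriction the paper's choice $r\geqslant 1/(2\lambda)$ exactly violates. So the honest conclusion is: your argument proves a corrected lemma (with the slope condition added), but it does not, and cannot, prove the lemma as written.
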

\begin{proof}
 By virtue of \cite[Theorem 2.64 $(ii)$]{Attouch} it suffices to show that there is $r>0$ and $x_0\in H$ such that $f^n(x)+r(d(x,x_0)^2+1)\geqslant 0$ for all $x\in H$ and all $n\in\mathbb{N}$. Let $x_0\in H$ be such that $\lim_nf^n_{\lambda}(x_0)=\alpha_0\in\mathbb{R}$. Notice that by definition of Moreau envelope we have
 $$f^n(x)\geqslant f^n_{\lambda}(x_0)-\frac{1}{2\lambda}d(x_0,x)^2\geqslant \alpha_0-\delta-\frac{1}{2\lambda}d(x_0,x)^2$$
 for some $\delta>0$ and sufficiently large $n$. If one takes $\delta=\alpha_0+1/2\lambda$ then one gets
 $$f^n(x)\geqslant -\frac{1}{2\lambda}(d(x_0,x)^2+1),\quad\forall x\in H.$$
 For any $r\geqslant 1/2\lambda$ we obtain $f^n(x)+r(d(x_0,x)^2+1)\geqslant0$ for all $x\in H$ and all $n\in\mathbb{N}$.
\end{proof}

Let $f:H\to(-\infty,+\infty]$. The {\em geodesic lower directional derivative} of $f$ at $x\in H$ along a geodesic $\gamma\in\Gamma_x(H)$ is defined as 
\begin{equation}
 \label{lowerderivative}
 f'_{-}(x;\gamma):=\liminf_{y\overset{\gamma}\to x}\frac{f(y)-f(x)}{d(y,x)}.
\end{equation}
Analogously the {\em  geodesic upper directional derivative}, denoted by $f'_{+}(x;\gamma)$, is defined with liminf replaced by limsup. If both limits exist and coincide then we say $f$ is geodesically differentiable at $x$ along $\gamma\in\Gamma_x(H)$ and denote it by $f'(x;\gamma)$.

\begin{thm}[Attouch's Theorem for Hadamard spaces]
 \label{th:allaAttouch}
 Let $f^n,f:H\to(-\infty,+\infty]$ be a sequence of closed convex functions such that
 \begin{enumerate}[(i)]
  \item $\forall \lambda>0,\forall x\in H$ it holds $\lim_nJ^n_{\lambda}x=J_{\lambda}x$\label{a:ass1}
  \item $(f^n)_{n\in\mathbb{N}}$ satisfies the normalization condition\label{a:ass2} with $(x_n)_{n\in\mathbb{N}}$ such that $x_n\to x_0\in H$
  \item $\lim_nf'_{n,\lambda}(x_t;\gamma)=f'_{\lambda}(x_t;\gamma)$ for all $\gamma\in\Gamma_{x_0}(H)$ and $x_t\in\gamma$ where $t\in[0,1]$\label{a:ass3}.
 \end{enumerate}
Then $\forall \lambda>0,\forall x\in H$ it holds $\lim_nf^n_{\lambda}(x)=f_{\lambda}(x)$.
\end{thm}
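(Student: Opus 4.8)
The plan is to prove $\lim_n f^n_\lambda(x)=f_\lambda(x)$ pointwise by a subsequence argument: I will show that every subsequence of $(f^n_\lambda)_n$ has a further subsequence converging pointwise to $f_\lambda$. Fix $\lambda>0$ (everything is uniform in $\lambda$), and note that I will use throughout that $x_0\in\dom|\partial f|$ — equivalently $x_0\in\im J_\lambda$ — which is the relevant case: it is automatic when $(f^n)$ is produced as in Lemma \ref{normalization1}, and it is what makes \eqref{a:ass2} informative, giving $f(x_0)<+\infty$ and $|\partial f|(x_0)<+\infty$. \textbf{Step 1 (compactness).} First I would extract a subsequential limit. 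By Lemma \ref{keylemma2} applied to $f^n$ at $x_n$, $f^n_\lambda(x_n)\ge f^n(x_n)-\tfrac{\lambda}{2}|\partial f^n|^2(x_n)$, so \eqref{a:ass2} keeps the right-hand side bounded below, and since $f^n(y)\ge f^n_\lambda(x_n)-\tfrac1{2\lambda}d(y,x_n)^2$ one obtains a uniform quadratic minorant $f^n(y)\ge-r(d(y,x_0)^2+1)$ valid for all $y\in H$ and all $n$ (the finitely many small indices are absorbed since each proper closed convex $f^n$ already has such a minorant). Exactly as in the proof of Lemma \ref{l:equicontinuity}, \cite[Theorem 2.64 $(ii)$]{Attouch} then makes $(f^n_\lambda)_n$ equi locally Lipschitz; the same device applied to the constant sequence shows $f_\lambda$ is locally Lipschitz, hence convex and continuous. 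Combined with $f^n_\lambda(x_n)\le f^n(x_n)$ this makes $(f^n_\lambda(x))_n$ bounded for every $x$, so along any subsequence a diagonal extraction over the countably many points relevant to a fixed target $x$ (namely $x$, $x_0$, a point $z_0$ introduced below, and a countable dense subset of the geodesic $[x_0,x]$) yields a further subsequence along which $f^{n_k}_\lambda$ converges; by equi-Lipschitzness the limit is, locally uniformly along $[x_0,x]$, a convex locally Lipschitz function $g$.

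\textbf{Step 2 (identify $g$ up to a constant).} Let $\gamma\colon[0,1]\to H$ be the geodesic from $x_0$ to $x$, so $\gamma\in\Gamma_{x_0}(H)$. The maps $t\mapsto f^{n_k}_\lambda(\gamma(t))$ are convex, uniformly Lipschitz on $[0,1]$, and converge pointwise to $t\mapsto g(\gamma(t))$; since for convex functions on an interval pointwise convergence forces convergence of the one-sided derivatives at every point of differentiability of the limit, these derivatives converge for a.e.\ $t$ to $(g\circ\gamma)'(t)$. By hypothesis \eqref{a:ass3} the same one-sided derivatives converge for every $t\in[0,1]$ — up to the constant speed of $\gamma$ — to $(f_\lambda\circ\gamma)'(t)$. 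Hence $(g\circ\gamma)'=(f_\lambda\circ\gamma)'$ a.e.\ on $[0,1]$; both functions being convex, hence absolutely continuous on $[0,1]$, integration gives $g(x)-g(x_0)=f_\lambda(x)-f_\lambda(x_0)$. As $x=\gamma(1)$ is arbitrary and every point of $H$ is the terminal point of a geodesic from $x_0$, this proves $g=f_\lambda+c$ for the single constant $c:=g(x_0)-f_\lambda(x_0)$.

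\textbf{Step 3 (the constant vanishes).} This is the heart of the matter. Because $x_0\in\im J_\lambda$, fix $z_0\in H$ with $J_\lambda z_0=x_0$. By \eqref{a:ass1}, $J^{n_k}_\lambda z_0\to x_0$, and by Lemma \ref{lem2}, $|\partial f^{n_k}|(J^{n_k}_\lambda z_0)\le d(J^{n_k}_\lambda z_0,z_0)/\lambda\to d(x_0,z_0)/\lambda<+\infty$. From the definition of the Moreau approximate, $f^{n_k}(J^{n_k}_\lambda z_0)=f^{n_k}_\lambda(z_0)-\tfrac1{2\lambda}d(J^{n_k}_\lambda z_0,z_0)^2\to g(z_0)-\tfrac1{2\lambda}d(x_0,z_0)^2=f_\lambda(z_0)+c-\tfrac1{2\lambda}d(x_0,z_0)^2=f(x_0)+c$, using $f_\lambda(z_0)=f(x_0)+\tfrac1{2\lambda}d(x_0,z_0)^2$. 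On the other hand \eqref{a:ass2} gives $f^{n_k}(x_{n_k})\to f(x_0)$, $|\partial f^{n_k}|(x_{n_k})\to|\partial f|(x_0)<+\infty$, and $x_{n_k}\to x_0$. Thus for large $k$ both $x_{n_k}$ and $J^{n_k}_\lambda z_0$ lie in $\dom|\partial f^{n_k}|$, their slopes are bounded, and $d(x_{n_k},J^{n_k}_\lambda z_0)\to 0$; Lemma \ref{lem1} then forces $|f^{n_k}(x_{n_k})-f^{n_k}(J^{n_k}_\lambda z_0)|\le\max\{|\partial f^{n_k}|(x_{n_k}),|\partial f^{n_k}|(J^{n_k}_\lambda z_0)\}\,d(x_{n_k},J^{n_k}_\lambda z_0)\to0$. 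Hence $f(x_0)=f(x_0)+c$, so $c=0$ and $g=f_\lambda$. Every subsequence of $(f^n_\lambda)_n$ therefore has a further subsequence converging pointwise to $f_\lambda$, which is the claim.

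The step I expect to be the main obstacle is Step 3: making rigorous that $x_0\in\dom|\partial f|$ implies $x_0\in\im J_\lambda$ (this is precisely why I confine attention to such $x_0$, and it rests on the resolvent calculus of \cite{Bacak}), together with the transfer — via Lemma \ref{lem1} — of convergence from the normalization sequence $(x_{n_k})$ to the sequence $(J^{n_k}_\lambda z_0)$, since this is the only mechanism tying the otherwise free constant $c$ to $0$. Secondary points needing care are the precise identification in Step 2 of the geodesic one-sided directional derivatives of $f^n_\lambda$ and $f_\lambda$ with the quantities in \eqref{a:ass3} (including the speed normalization of $\gamma$ and the endpoints $t=0,1$), the verification that the minorant in Step 1 is genuinely uniform in $n$, and the routine reduction to countably many points in the Arzelà--Ascoli extraction.
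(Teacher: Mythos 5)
Your Steps 1 and 2 are sound and in fact run parallel to the paper's own argument: the paper likewise uses the equi-local-Lipschitz machinery of Lemma \ref{l:equicontinuity} and then propagates convergence from $x_0$ along geodesics using assumption \eqref{a:ass3} together with the absolute continuity of $t\mapsto f^n_{\lambda}(\gamma(t))$ and dominated convergence; your subsequence formulation and your derivation of the uniform quadratic minorant from the normalization condition are, if anything, cleaner than the paper's. The divergence is entirely in how the additive constant is pinned down: the paper anchors directly at $x_0$ by setting $u_n:=J^n_{\lambda}x_n$, showing $u_n\to J_{\lambda}x_0$ and $f^n(u_n)\to f(J_{\lambda}x_0)$, hence $f^n_{\lambda}(x_n)\to f_{\lambda}(x_0)$, and then transferring to $f^n_{\lambda}(x_0)$ by equi-Lipschitzness — no auxiliary point $z_0$ is needed.

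Your Step 3, however, contains a genuine gap. The asserted equivalence ``$x_0\in\dom|\partial f|$ --- equivalently $x_0\in\im J_{\lambda}$'' is false in a general Hadamard space. Only the inclusion $\im J_{\lambda}\subseteq\dom|\partial f|$ is available (Lemma \ref{lem2}); the reverse inclusion is the Hilbert-space identity $\im(I+\lambda\partial f)^{-1}=\dom\partial f$, which rests on being able to move from $x_0$ a distance $\lambda|\partial f|(x_0)$ \emph{against} the descent direction, and this fails as soon as geodesics cannot be extended. A minimal counterexample: $H=[0,1]$, $f(y)=-y$, $x_0=0$. Then $|\partial f|(0)=1$ by Lemma \ref{lem1}, so $x_0\in\dom|\partial f|$, yet $J_{\lambda}z=\min\{z+\lambda,1\}\geqslant\lambda>0$ for every $z\in[0,1]$, so $0\notin\im J_{\lambda}$. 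Without a point $z_0$ with $J_{\lambda}z_0=x_0$ your entire mechanism for forcing $c=0$ collapses, because the normalization condition supplies information about $f^n$ near $x_0$ while what you need is the value of some $f^{n_k}_{\lambda}$ (equivalently of $f^{n_k}$ at a point of the form $J^{n_k}_{\lambda}z$), and assumption \eqref{a:ass1} only ever hands you points in $\im J^{n_k}_{\lambda}$. Restricting the theorem to $x_0\in\im J_{\lambda}$ would silently strengthen hypothesis \eqref{a:ass2} beyond what is stated. The repair is to argue as the paper does at the anchor: from $f^n_{\lambda}(x_0)\leqslant f^n(x_n)+\tfrac{1}{2\lambda}d(x_0,x_n)^2$ and, via Lemma \ref{lem1}, $f^n_{\lambda}(x_0)\geqslant f^n(x_n)-|\partial f^n|(x_n)\,d(J^n_{\lambda}x_0,x_n)+\tfrac{1}{2\lambda}d(J^n_{\lambda}x_0,x_0)^2$, one gets two-sided bounds at $x_0$ itself using only \eqref{a:ass1} and \eqref{a:ass2}; the remaining work (which is also the delicate point in the paper's own proof) is to close the gap between the resulting upper bound $f(x_0)$ and the desired value $f_{\lambda}(x_0)$.
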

\begin{proof}
 Let $(f^n)_{n\in\mathbb{N}},f$ satisfy the normalization condition. Then there exists $(x_n),x_0\subset H$ such that $\lim_nx_n=x_0,\lim_nf^n(x_n)=f(x_0)$ and $\lim_n|\partial f^n|(x_n)=|\partial f|(x_0)$.  Let $\lambda>0$. First we claim that $\lim_nf^n_{\lambda}(x_0)=f_{\lambda}(x_0)$. Introduce the variables $u_n:=J^n_{\lambda}x_n$ for each $n\in\mathbb{N}$ and $u_0:=J_{\lambda}x_0$. Note that by assumption \eqref{a:ass1} for each fixed $m\in\mathbb{N}$ we have $\lim_nJ^n_{\lambda}x_m=J_{\lambda}x_m$. Since the mapping $x\mapsto J_{\lambda}x$ is nonexpansive and therefore continuous, then $\lim_mJ_{\lambda}x_m=J_{\lambda}x_0$. By triangle inequality $d(J^n_{\lambda}x_n,J_{\lambda}x_0)\leqslant d(J^n_{\lambda}x_n,J^n_{\lambda}x_m)+d(J^n_{\lambda}x_m,J_{\lambda}x_0)$ and nonexpansiveness of $J^n_{\lambda}$ we have
 $$d(J^n_{\lambda}x_n,J_{\lambda}x_0)\leqslant d(x_n,x_m)+d(J^n_{\lambda}x_m,J_{\lambda}x_0).$$
 Passing in the limit as $m,n\uparrow+\infty$ we obtain $\lim_nu_n=\lim_nJ^n_{\lambda}x_n=J_{\lambda}x_0=u_0$.
 On the other hand  
 $$|f^n(u_n)-f(u_0)|\leqslant |f^n(u_n)-f^n(x_n)|+|f^n(x_n)-f(x_0)|+|f(x_0)-f(u_0)|.$$
 By normalization condition and using $\lim_{\lambda\downarrow 0}u_n=\lim_{\lambda\downarrow 0}J^n_{\lambda}x_n=x_n, \lim_{\lambda\downarrow 0}u_0=\lim_{\lambda\downarrow 0}J_{\lambda}x_0=x_0$ and lsc of $f^n$ and $f$ implies in the limit as $\lambda\downarrow0$ and $n\uparrow+\infty$ that $\lim_nf^n(u_n)=f(u_0)$. Again by definition of Moreau envelope 
 $$f^n_{\lambda}(x_n)=f^n(u_n)+\frac{1}{2\lambda}d(x_n,u_n)^2\to f(u_0)+\frac{1}{2\lambda}d(x_0,u_0)^2:=f_{\lambda}(x_0),\quad\text{as}\quad n\uparrow+\infty.$$
 Note that 
 $$f^n_{\lambda}(x_0)\leqslant f^n(x_n)+\frac{1}{2\lambda}d(x_0,x_n)^2\to f(x_0)\quad\text{as}\quad n\uparrow+\infty.$$
 On the other hand we have 
 \begin{align*}
  f^n_{\lambda}(x_0)\geqslant f^n(J^n_{\lambda}x_0)\geqslant f^n(x_n)&-|\partial f^n|(x_n)d(J^n_{\lambda}x_0,x_n)\\&\to f(x_0)-|\partial f|(x_0)d(J_{\lambda}x_0,x_0)>-\infty\quad\text{as}\quad n\uparrow+\infty.
 \end{align*}
 In particular we obtain that $-\infty<\liminf_nf^n_{\lambda}(x_0)\leqslant\limsup_nf^n_{\lambda}(x_0)<+\infty$ (one can assume that $x_0\in\dom f$ else there is nothing to show). By Lemma \ref{l:equicontinuity} we get that $(f^n_{\lambda})_{n\in\mathbb{N}}$ is equi locally Lipschitz in $H$. This means that for any bounded domain $K\subseteq H$ there is $C_K>0$ such that 
 $$|f^n_{\lambda}(x)-f^n_{\lambda}(y)|\leqslant C_Kd(x,y),\quad\forall x,y\in K,\forall n\in\mathbb{N}.$$
From this and the estimate
$$|f^n_{\lambda}(x_0)-f_{\lambda}(x_0)|\leqslant |f^n_{\lambda}(x_0)-f^n_{\lambda}(x_n)|+|f^n_{\lambda}(x_n)-f_{\lambda}(x_0)|\leqslant C_Kd(x_n,x_0)+|f^n_{\lambda}(x_n)-f_{\lambda}(x_0)|.$$
follows $\lim_nf^n_{\lambda}(x_0)=f_{\lambda}(x_0)$. Now define $g_{n,\lambda}(t):=f^n_{\lambda}(x_t)$ where $x_t:=(1-t)x_0\oplus tx$ and $x\in H$ is arbitrary. Consider 
$$g'_{n,\lambda}(t):=\lim_{s\to 0}\frac{g_{n,\lambda}(t+s)-g_{n,\lambda}(s)}{s}.$$
Since $f^n_{\lambda}$ is convex for each $n\in\mathbb{N}$ then it is absolutely continuous on every geodesic segment. In particular $g'_{n,\lambda}(t)$ exists almost everywhere on $[0,1]$, it is Lebesgue integrable on the interval $[0,1]$ and satifies 
\begin{equation}
 \label{Lebesgue}
 f^n_{\lambda}(x)=f^n_{\lambda}(x_0)+\int^1_0g'_{n,\lambda}(t)\,dt.
\end{equation}
On the other hand $g'_{n,\lambda}(t)=f'_{n,\lambda}(x_t;\gamma)d(x_0,x)$ where $\gamma\in\Gamma_{x_0}(H)$ connects $x_0$ with $x$ and $x_t\in\gamma$. Assumption \eqref{a:ass3} implies $\lim_ng'_{n,\lambda}(t)=g'_{\lambda}(t)$ for all $t\in[0,1]$. Moreover equi locally Lipschitz property of $(f^n_{\lambda})_{n\in\mathbb{N}}$ implies that $\sup_ng'_{n,\lambda}(t)\leqslant C_K d(x_0,x)$ for any bounded domain $K$ around $x_0$ and $x\in K$. By Lebesgue dominated convergence theorem we obtain in the limit 
$$\lim_nf^n_{\lambda}(x)=f_{\lambda}(x_0)+\int^1_0\lim_ng'_{n,\lambda}(t)\,dt=f_{\lambda}(x_0)+\int^1_0g'_{\lambda}(t)\,dt=f_{\lambda}(x).$$
 
\end{proof}

 \bibliographystyle{amsplain}
  \bibliography{literature}

%
%

\end{document}